\newtheorem{teo}{Theorem}[section]
\newtheorem{prop}[teo]{Proposition}
\newtheorem{lem}[teo]{Lemma}
\newtheorem{coro}[teo]{Corollary}
\theoremstyle{definition}
\newtheorem{rem}[teo]{Remark}
\newtheorem{nota}[teo]{Notation}
\newtheorem{ejem}[teo]{Example}
\newtheorem{ejems}[teo]{Examples}
\newcommand{\PI}[2]{\left\langle #1 , #2 \right\rangle}
\def\b{{\cal B}}
\def\a{{\cal A}}
\def\gg{\mathbb{G}}
\def\b{{\cal B}}
\def\u2{U_2({\cal H})}
\def\o2{ {\cal O}_A}
\def\noi{\noindent}
\begin{document}

\title{The group of $L^2$-isometries on $H^1_0$\vspace*{0cm} \footnote{\textit{2010 Mathematical Subject Classification: Primary  47D03;  Secondary 22E65, 58B20.}}}
\date{}
\author{E. Andruchow, E. Chiumiento and G. Larotonda \footnote{All authors are partially supported by Instituto Argentino de Matemática {\it Alberto P. Calder\'on} and CONICET}}

\maketitle

\abstract{Let $\Omega$ be an open subset of  $\mathbb R^n$. Let $L^2=L^2(\Omega,dx)$ and  $H^1_0=H^1_0(\Omega)$ be the standard  Lebesgue and Sobolev  spaces of complex-valued functions. The aim of this paper is to study the group $\gg$ of invertible operators on  $H^1_0$ which preserve the $L^2$-inner product. When $\Omega$ is bounded and $\partial \Omega$ is smooth, this group acts as the intertwiner of the $H^1_0$ solutions of the non-homogeneous Helmholtz equation $u-\Delta u=f$,  $u|_{\partial \Omega}=0$. We show that $\gg$   is a real Banach-Lie group, whose Lie algebra  is ($i$ times)   the space of symmetrizable operators. We discuss  the spectrum of operators belonging to $\gg$ by   means of examples. In particular,  we give an example  of an operator in $\gg$ whose spectrum is not contained in the unit circle.  We also study the one parameter subgroups of $\gg$. Curves of minimal length in $\gg$ are considered.  We introduce the subgroups  $\gg_p:=\gg \cap (I - \b_p(H^1_0))$, where $\b_p(H_0^1)$ is a Schatten ideal of operators on $H_0^1$. An invariant  (weak) Finsler metric is defined by the  $p$-norm of the Schatten ideal of operators of $L^2$. We prove  that any pair of operators $G_1 , G_2 \in \gg_p$ can be joined by a minimal curve of the form $\delta(t)=G_1 e^{itX}$, where $X$ is a symmetrizable operator in $\b_p(H^1_0)$.\footnote{\textbf{Keywords and phrases:} Group of isometries of a positive form, Sobolev space,  symmetrizable operator, one parameter subgroup, minimal curve}

\section{ Introduction}
Let $\Omega\subset \mathbb{R}^n$ be an open subset.  Denote by  $L^2=L^2(\Omega, dx)$  the   Lebesgue space of square-integrable functions endowed with its usual inner product $\PI{\, \cdot \,}{\, \cdot \,}$. Let  $H_0^1=H_0^1(\Omega)$ be the closure in the Sobolev norm  of the $C^{\infty}$ functions with compact support contained in $\Omega$.
In this paper, we study the group $\gg$ of invertible operators on $H^1_0$ that preserve the $L^2$-inner product:
$$
\gg=\{ \, G\in Gl(H^1_0) \, : \,  \PI{Gf}{Gg}=\PI{f}{g} \, \}.
$$
In the case where $\Omega=\mathbb{R}^n$, the group $\gg$ was introduced in \cite{CM11} in relation with the geometry of the  variational spaces arising in the many-particle Hartree-Fock theory.
One could give an abstract definition of $\gg$, involving a complex Hilbert space $H$ and a dense and continuously included subspace $E\subset H$ with their respective (non equivalent) inner products. However, we preferred this concrete setting given by the inclusion $H_0^1 \subset L^2$ because we shall deal mainly with examples.

From the definition of the group $\gg$, it is clear that  the theory of operators on spaces with two norms will play a central role in the study of this group. This theory was independently initiated by M. G. Krein \cite{k} and P. D. Lax \cite{l}. In Section \ref{2 norms} we recall the most useful results for our purposes.

 Our first results on the structure of $\gg$ are given in Section \ref{basics}. We prove that $\gg$  is a real Banach-Lie group equipped with norm  of the algebra of bounded operators $\b(H^1_0)$, and its Lie algebra $\Gamma$ can be identified with the real Banach space of operators $X \in \b(H^1_0)$ such that
$\PI{Xf}{g}=-\PI{f}{Xg}$ for any $f,g \in H^1_0$. Thus $i \, \Gamma$ is a well studied  class of   operators that naturally arises when one deals with spaces with two norms, which is usually known as  the class of \textit{symmetrizable operators}.
     An alternative description of $\gg$ is given by
\[ \gg=\{G\in Gl(H^1_0): G^*AG=A\}, \]
where  $A$ is the positive operator on $H^1_0$ satisfying $[Af,g]=\PI{f}{g}$ and $[ \, \cdot \, , \, \cdot \,  ]$ denotes the inner product on $H_0^1$.
In fact, when $\Omega$ is a bounded domain in $\mathbb{R}^n$ and $\partial \Omega$ is smooth, the operator $A$ is the solution operator of the Sturm-Liouville equation. On the other hand, note that any operator belonging to $\gg$ may be extended to an unitary operator on $L^2$. This extension procedure induces a norm continuous representation  of the group onto $L^2$, which  does not have a continuous inverse.

In  Section \ref{section spectra} we examine by examples different elementary aspects of $\gg$. For instance, we show that the  norm of an operator in $\gg$ can be arbitrarily large. In the general setting of operators on spaces with two norms, it is known that there exist symmetrizable operators with non real spectrum. Nevertheless, the  few examples of this fact do not apply to our concrete situation (see \cite{ barnes, dieudonne, k}). We present an example of a symmetrizable operator belonging to $i \Gamma$ with non real spectrum (Example \ref{non real}). In particular, this implies that the spectrum of operators in $\gg$ may not be contained in the unit circle. Another interesting problem is to determine if $\gg$ is an exponential group. It turns out that this property depends on the topology of the set $\Omega$  (see Proposition \ref{exp multiplication}; Example \ref{example exp}).

In Section \ref{one parameter} we investigate the  one parameter subgroups of $\gg$.  We construct a  norm continuous unitary representation  $\gg \to U(H^1_0)$, $G \mapsto U_G$ satisfying $U_GA^{1/2}=A^{1/2}G$. Then we study  the infinitesimal generators associated with this representation.

The results concerning  the metric geometry of $\gg$ are presented in Section \ref{Finsler metrics}.  A natural invariant Finsler metric in $\gg$ is provided by the usual operator norm of $\b(L^2)$. If one measures length of curves with this metric, $\gg$ behaves like a unitary group near the identity. Indeed, any operator $G$ in $\gg$ such that $\|G - I \| \leq 1$ can be joined by a minimal curve of the form $\delta(t)=e^{itX}$, where $X$ is a symmetrizable operator and $\| \, \cdot \, \|$ stands for the operator norm in  $\b(H^1_0)$.
Next we consider the following subgroups:
\[  \gg_p  = \gg \cap (I - \b_p(H^1_0)),    \]
where $\b_p(H^1_0)$ is a Schatten ideal of operators on $H^1_0$ ($1\leq p \leq \infty$). Essentially due to the fact that the logarithm of operators in $\gg_p$ is well defined, we are able to extend the afore-mentioned minimality result to a global result in $\gg_p$, where the Finsler metric is now  the $p$-norm of the Schatten ideal $\b_p(L^2)$ (see Theorem \ref{thm minimal curves}).

\begin{nota} We end this introduction by fixing some notation. The Sobolev space $H_0 ^1$ is a Hilbert space with the inner product given by
\[   [f,g]=  \int_{\Omega}f \bar{g} \, dx \, +  \,   \int_{\Omega} \nabla f \cdot \nabla \bar{g} \, dx.     \]
To avoid confusion among the several norms considered, we  denote by $|\ \, \cdot  \, |_1 \, (= [\, \cdot \, , \, \cdot \,]^{1/2})$ the norm on $H^1_0$,  $| \, \cdot  \, |_2 \, (=\PI{\, \cdot \,}{\, \cdot \,}^{1/2})$ the norm on $L^2$,
 $\|\ \, \cdot  \, \|$ the operator norm in $\b(H^1_0)$, and by $\|\ \, \cdot  \, \|_{\b(L^2)}$ the operator norm in $\b(L^2)$. If a given operator $X$ acts both on $L^2$ and $H^1_0$, we shall denote by $\sigma_{L^2}(X)$ its spectrum as an operator on $L^2$, and by $\sigma_{H^1_0}(X)$ its spectrum as an operator on $H^1_0$.
\end{nota}

\medskip

\section{Background on operators on spaces with two norms}\label{2 norms}

 Let $H$  be a Hilbert space with an inner product $\PI{\,\cdot \,}{\, \cdot \,}$ and the associated norm $\| \, \cdot \, \|$.    Let $(E, | \, \cdot \,|_E)$ be a Banach space.    Assume that $E$ is a dense linear subspace of $H$ and suppose that the norms satisfy $\| \, \cdot \, \| \leq C | \, \cdot \, |_E$ for some positive constant $C$. Throughout this article, we are interesting in the special case where $E=H_0^1$ and $H=L^2$.

 Let $\b(E)$ (resp. $\b(H)$) denote the algebra of bounded operators on $E$ (resp. $H$).
An operator $X$ in $\b(E)$ is said to be \textit{symmetrizable} if
\[  \PI{Xf}{g}=\PI{f}{Xg}, \, \, \, \, \, f, g \in E. \]
Given an operator $X \in \b(E)$, we denote by $\sigma_E(X)$  the spectrum of $X$ over $E$. We use the obvious notation $\sigma_H(X)$ for the spectrum of $X$ over $H$.   In the following theorem we collect the basics results on  symmetrizable operators.
\begin{teo}[M. G. Krein \cite{k}, P. D. Lax \cite{l}]\label{symmetrizable results}
Let $X$ be a symmetrizable operator. The following assertions hold:
\begin{enumerate}
\item[i)] $X$ is bounded as an operator on $H$.
\item[ii)] $\sigma_H(X) \subseteq \sigma_E(X)$.
\item[iii)] If $\lambda$ belongs to the point spectrum of $X$ as an operator on $E$, then $\lambda$ belongs to the point spectrum of $X$ as an operator on $H$. Moreover, the eigenspace $\ker(X- \lambda)$  over $E$ and $H$ is the same.
\item[iv)] If $X$ is a compact operator on $E$, then $X$  is a compact operator on $H$.
\end{enumerate}
\end{teo}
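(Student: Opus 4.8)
The plan is to prove the four items in order, each one feeding the next, with the bounded self-adjoint extension constructed in (i) serving as the engine for everything that follows.

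For (i), fix $f\in E$ and set $a_n=\|X^n f\|$. Symmetry of $X$ with respect to $\PI{\,\cdot\,}{\,\cdot\,}$ gives $a_n^2=\PI{X^n f}{X^n f}=\PI{X^{n-1}f}{X^{n+1}f}\le a_{n-1}a_{n+1}$ by Cauchy--Schwarz, so $(a_n)$ is log-convex and the ratios $a_n/a_{n-1}$ are non-decreasing. Hence $\|Xf\|^n=a_1^n\le a_0^{n-1}a_n=\|f\|^{n-1}\|X^n f\|$, and since $X$ is bounded on $E$ and $\|\,\cdot\,\|\le C|\,\cdot\,|_E$ this yields $\|Xf\|^n\le C\,\|X\|_{\b(E)}^n\,|f|_E\,\|f\|^{n-1}$. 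Taking $n$-th roots and letting $n\to\infty$ gives $\|Xf\|\le\|X\|_{\b(E)}\|f\|$, so by density $X$ extends to $\tilde X\in\b(H)$ with $\|\tilde X\|_{\b(H)}\le\|X\|_{\b(E)}$. Passing to the limit in the symmetry relation shows that $\tilde X$ is a bounded self-adjoint operator on $H$; in particular $\sigma_H(X)\subseteq\mathbb R$, which will be used repeatedly below.

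For (ii) I would prove the equivalent inclusion $\rho_E(X)\subseteq\rho_H(X)$. If $\lambda\in\rho_E(X)$ is non-real, then $\lambda\in\rho_H(X)$ automatically because $\tilde X$ is self-adjoint. If $\lambda\in\rho_E(X)$ is real, a short computation using symmetry shows that $(X-\lambda)^{-1}\in\b(E)$ is again symmetrizable (realness of $\lambda$ is exactly what makes this work), so by (i) it extends to a bounded operator on $H$; checking on the dense set $E$ that this extension is a two-sided inverse of $\tilde X-\lambda$ gives $\lambda\in\rho_H(X)$. Item (iii) then splits into an easy and a hard inclusion. If $Xf=\lambda f$ with $0\ne f\in E$, then $\tilde X f=\lambda f$ in $H$, so $\lambda$ is an eigenvalue of $\tilde X$, $\lambda\in\mathbb R$ by self-adjointness, and $\ker_E(X-\lambda)\subseteq\ker_H(\tilde X-\lambda)$. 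The reverse inclusion---that every $H$-eigenvector lies in $E$---is the main obstacle, and it is the technical heart of the theorem: it cannot hold with no finiteness condition (an infinite-rank symmetrizable projection has a strictly larger zero-eigenspace in $H$ than in $E$), so one must exploit that $X-\lambda$ is Fredholm on $E$, as happens for compact $X$ and $\lambda\ne0$. In that case symmetry gives $\PI{f}{(X-\lambda)g}=0$ for all $g\in E$ whenever $\tilde X f=\lambda f$, i.e.\ $\ker_H(\tilde X-\lambda)\perp (X-\lambda)E$; since $(X-\lambda)E$ has finite codimension $m=\dim\ker_E(X-\lambda)$ in $E$ and this kernel is finite-dimensional, its $H$-closure has $H$-codimension at most $m$, whence $\dim\ker_H(\tilde X-\lambda)\le m$. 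Combined with the easy inclusion this forces equality of the two eigenspaces.

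Finally, for (iv) assume $X$ is compact on $E$. By Riesz--Schauder, $\sigma_E(X)\setminus\{0\}$ consists of eigenvalues of finite $E$-multiplicity accumulating only at $0$, and by (ii) together with self-adjointness $\sigma_H(\tilde X)\subseteq\sigma_E(X)\cap\mathbb R$ inherits this structure. Each nonzero $\lambda\in\sigma_H(\tilde X)$ is therefore an isolated eigenvalue, and by the Fredholm case of (iii) its $H$-eigenspace coincides with $\ker_E(X-\lambda)$ and is finite-dimensional. The spectral theorem for self-adjoint operators then writes $\tilde X=\sum_k\lambda_k P_k$ with finite-rank orthogonal projections $P_k$ and $\lambda_k\to0$, a norm-convergent series that exhibits $\tilde X$ as a norm limit of finite-rank operators, hence compact. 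The crux throughout is the regularity statement buried in (iii): that eigenvectors computed in the large space $H$ in fact belong to the small space $E$, and that this transfer of eigenspaces is what both (iii) and (iv) ultimately rest upon.
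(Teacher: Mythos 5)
The paper offers no proof of this theorem to compare against: it is stated as background and attributed to Krein \cite{k} and Lax \cite{l}, so your argument must be judged on its own merits. Judged so, items (i), (ii) and (iv) are correct, and your route is essentially the classical one. The log-convexity of $a_n=\|X^nf\|$ together with $\|Xf\|^n\le\|f\|^{n-1}\|X^nf\|$ is precisely Lax's argument for (i); the observation that for \emph{real} $\lambda\in\rho_E(X)$ the inverse $(X-\lambda)^{-1}$ is again symmetrizable (non-real $\lambda$ being harmless because the extension $\tilde X$ is self-adjoint) correctly gives (ii); and your proof of (iv) --- every nonzero point of $\sigma_H(\tilde X)\subseteq\sigma_E(X)$ is an isolated point of the spectrum of the self-adjoint $\tilde X$, hence an eigenvalue, of finite multiplicity by your codimension bound, so the spectral theorem exhibits $\tilde X$ as a norm limit of finite-rank operators --- is in substance Krein's.

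Concerning (iii), you are right on the essential point, and this is the most valuable part of your write-up: for a general symmetrizable $X$ the clause ``the eigenspace over $E$ and $H$ is the same'' is false as stated. A concrete counterexample in the paper's own setting: on $\Omega=(0,1)$ let $\tilde P$ be the $L^2$-orthogonal projection onto the closed span of $\{\sin(2k\pi x):k\ge 1\}$; then $\tilde P$ leaves $H^1_0$ invariant and restricts to a bounded symmetrizable idempotent $P$ on $H^1_0$, but $\ker_{L^2}(\tilde P)$, the closed span of the odd modes, contains $L^2$-functions that are not in $H^1$, so it strictly contains $\ker_{H^1_0}(P)$; the ranges show the same failure at the nonzero eigenvalue $1$. (Your parenthetical should be phrased more carefully: not \emph{every} infinite-rank symmetrizable projection fails --- the identity does not --- but any one whose $H$-kernel is not contained in $E$ does.) What is true, and what you prove correctly via the orthogonality $\ker_H(\tilde X-\lambda)\perp(X-\lambda)E$ for real $\lambda$ together with the codimension count, is equality of eigenspaces when $X-\lambda$ is Fredholm of index zero on $E$, in particular for $X$ compact and $\lambda\neq 0$. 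This restricted version is exactly the form in which the classical results hold (compare Theorem \ref{proper op} iii), quoted from \cite{gz}), and it is the only form the paper ever uses: in Theorem \ref{thm minimal curves} and in Section \ref{Finsler metrics}, item (iii) is invoked for compact $X\in\Gamma$ at nonzero eigenvalues. So your proof, with the Fredholm hypothesis made explicit, is a complete --- indeed corrected --- version of the stated theorem, and the restriction loses nothing for the paper's purposes.
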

\begin{rem}
\noi It is not difficult to see that the two possible norms of a  symmetrizable operator $X$ satisfy
$  \|X\|_{{\mathcal B}(H)} \leq \| X\|_{{\mathcal B}(E)} \, . $
\end{rem}
\noi A more general approach to study operators on spaces with two norms can be found in \cite{gz}. Since any $f \in H$ determines a continuous functional $\PI{\, \cdot \,}{f}$ of the space $E^*$, it follows that $E \subseteq  H \subseteq E^*$. A bounded operator $X$ on $E$ is called \textit{proper}
if $X'(E) \subseteq E$, where $X'$ is the (Banach) adjoint of $X$. If $X$ is proper, $X^+$ denotes the restriction of $X'$ to $E$. It can be shown that $X^+$ is the restriction to $E$ of the adjoint on $H$.

\begin{teo}[I. C. Gohberg, M. K. Zambicki\v{\i} \cite{gz}]\label{proper op}
Let $X$ be a proper operator. The following assertions hold:
\begin{enumerate}
	\item[i)] $X$ is bounded as an operator on $H$.
	\item[ii)] $\sigma_H(X) \subseteq \sigma_E(X) \cup \overline{\sigma_E(X^+)}$, where the bar indicates complex conjugation.
  \item[iii)] If $X$ is a compact operator on $E$, then $X$  is a compact operator on $H$. Moreover, $\sigma_H(X)=\sigma_E(X)$ and the eigenspaces of $X$  in  $E$ and $H$ corresponding to the non zero eigenvalues  coincide.
\end{enumerate}
\end{teo}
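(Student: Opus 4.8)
The three statements share a common engine: although $X$ itself need not be symmetrizable, the operator $X^+X$ is, so I would reduce everything to Theorem~\ref{symmetrizable results}. I first record the identity $\PI{X^+u}{v}=\PI{u}{Xv}$ for $u,v\in E$, which merely restates $\PI{Xu}{v}=\PI{u}{X^+v}$; I also note that $X^+\in\b(E)$ (by the closed graph theorem, since $X^+=X'|_E$ is $E^*$-continuous with range in $E$) and that $(X^+)^+=X$. From the identity, $\PI{X^+Xf}{g}=\PI{Xf}{Xg}=\PI{f}{X^+Xg}$, so $X^+X$ is a symmetrizable, positive operator on $E$ with $\PI{X^+Xf}{f}=\|Xf\|^2$. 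For i), Theorem~\ref{symmetrizable results}~i) gives that $X^+X$ is bounded on $H$, whence $\|Xf\|^2=\PI{f}{X^+Xf}\le\|X^+X\|_{\b(H)}\|f\|^2$ for $f\in E$; thus $X$ is $\|\cdot\|$-bounded on the dense subspace $E$ and extends to a bounded operator on $H$.

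For ii), fix $\lambda\notin\sigma_E(X)\cup\overline{\sigma_E(X^+)}$ and set $Y=X-\lambda$. A direct computation gives $Y^+=X^+-\bar\lambda$, so $Y$ and $Y^+$ are both invertible in $\b(E)$; hence $S:=Y^+Y$ is invertible in $\b(E)$, and $S$ together with $S^{-1}$ is symmetrizable (for $S^{-1}$ write $f=Sa,\ g=Sb$ and use symmetrizability of $S$). By Theorem~\ref{symmetrizable results}~i) both extend to $\b(H)$, and by density these extensions are mutually inverse, so the extension of $S$ is a positive invertible operator on $H$, bounded below by some $c>0$. Therefore $\|Yf\|^2=\PI{Sf}{f}\ge c\|f\|^2$ on $E$, and the extension of $Y$ is bounded below on $H$, hence injective with closed range. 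Its Hilbert-space adjoint is the extension of $Y^+$ (because $\PI{Yf}{g}=\PI{f}{Y^+g}$ on $E$), which by the same argument applied to $Y^+$—using $(Y^+)^+=Y$—is injective; so $Y$ has dense range and, being bounded below, is invertible on $H$. Thus $\lambda\notin\sigma_H(X)$.

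For iii), if $X$ is compact on $E$ then so is $X^+X$, so Theorem~\ref{symmetrizable results}~iv) makes its extension $X^*X$ compact on $H$, and the polar decomposition forces the extension of $X$ to be compact on $H$ as well. For the spectra and eigenspaces I would use Riesz projections. Given $\lambda\neq0$, both $\sigma_E(X)$ and $\sigma_H(X)$ are discrete away from $0$, so I can choose a small circle $\gamma$ around $\lambda$ meeting neither set and enclosing no other spectral point. For $z\in\gamma$, $z$ lies in the resolvent set of $X$ both on $E$ and on $H$, and $(z-X)^{-1}$ on $H$ restricts on $E$ to the resolvent on $E$; hence the projections $P_E=\frac{1}{2\pi i}\oint_\gamma(z-X)^{-1}\,dz\in\b(E)$ and the corresponding $P_H\in\b(H)$ satisfy $P_H|_E=P_E$, with $\mathrm{ran}\,P_E\subseteq E$ finite dimensional. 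Approximating an arbitrary $h\in H$ by elements of $E$ and using that a finite-dimensional subspace is $\|\cdot\|$-closed, I get $\mathrm{ran}\,P_H=\mathrm{ran}\,P_E\subseteq E$. Consequently $\lambda$ lies in $\sigma_E(X)$ iff it lies in $\sigma_H(X)$ (equivalently $P_E\neq0$ iff $P_H\neq0$), the generalized eigenspaces in $E$ and $H$ coincide, and in particular so do the eigenspaces; together with $0$ lying in both spectra unless $E=H$, this yields $\sigma_H(X)=\sigma_E(X)$.

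The only genuinely delicate point is this last one—ruling out that the $H$-extension acquires spectral subspaces larger than those seen on $E$. The projection argument settles it precisely because $\mathrm{ran}\,P_E$ is finite dimensional, hence $\|\cdot\|$-closed, which confines $\mathrm{ran}\,P_H$ to $E$; everything else is a routine transfer of Hilbert-space facts along the dense continuous inclusion $E\subseteq H$.
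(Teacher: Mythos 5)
Your proof is correct, but there is nothing in the paper to compare it against: Theorem \ref{proper op} is stated as background and attributed to Gohberg and Zambicki\v{\i} \cite{gz}, with no proof given. What your argument supplies, therefore, is a self-contained derivation of the Gohberg--Zambicki\v{\i} theorem from the other quoted background result, the Krein--Lax Theorem \ref{symmetrizable results}, and it hinges on the single observation that $X^+X$, $Y^+Y$ and $YY^+$ (with $Y=X-\lambda$, $Y^+=X^+-\bar{\lambda}$) are symmetrizable. This yields i) with the explicit bound $\|X\|_{\mathcal{B}(H)}^2\le\|X^+X\|_{\mathcal{B}(H)}$; for ii), invertibility of $Y^+Y$ and $YY^+$ on $E$ passes to $H$ because their inverses are symmetrizable as well, so the extensions of $Y$ and of $Y^+$ --- the latter being the $H$-adjoint of the former --- are bounded below, and a bounded-below operator whose adjoint is injective is invertible; for iii), the equality $\mathrm{ran}\,P_H=\mathrm{ran}\,P_E$ of the two Riesz projections, forced by the finite rank of $P_E$ and the density of $E$ in $H$, gives simultaneously the equality of the spectra away from $0$ and the coincidence of generalized (hence ordinary) eigenspaces, which is in fact slightly more than the statement asks. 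All the steps check out; only two places merit one more written line. First, the passage from generalized eigenspaces to eigenspaces should record that any $H$-eigenvector for $\lambda\neq 0$ equals its own Riesz projection, hence lies in $\mathrm{ran}\,P_H=\mathrm{ran}\,P_E\subseteq E$, where $\tilde{X}$ and $X$ agree, so $\ker(\tilde{X}-\lambda)=\ker(X-\lambda)$. Second, in the endgame for $\sigma_H(X)=\sigma_E(X)$ the degenerate case $E=H$ should be dispatched explicitly: there the open mapping theorem makes the two norms equivalent, so the spectra coincide trivially, while if $E\subsetneq H$ then $E$ is infinite dimensional (a finite-dimensional dense subspace would be closed, hence all of $H$), so $0$ belongs to both spectra, as you indicate.
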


\noi If $E$ is also a Hilbert space with an inner product denoted by $[\, \cdot \, , \,  \cdot \,]$, it follows that there is a positive operator $A$ on $E$ such that $[Af,g]=\PI{f}{g}$. Thus $X$ is symmetrizable if and only if $AX=X^*A$, where the adjoint is taken with respect to $E$. The following result will be useful.

\begin{teo}[J. Dieudonné \cite{dieudonne}]\label{Quasiherm}
Let $A$ be a positive operator on a Hilbert space $E$.   Let $X$ be a bounded operator on $E$ such $AX=X^*A$. Then there is a unique self-adjoint operator $Y$ on $E$ such that $A^{1/2}X=YA^{1/2}$.
\end{teo}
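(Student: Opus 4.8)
The plan is to realize the sought operator as the formal conjugate $Y = A^{1/2}XA^{-1/2}$, made rigorous by \emph{defining $Y$ on the range of $A^{1/2}$} instead of inverting $A^{1/2}$. Concretely, I would set
\[ Y(A^{1/2}f) := A^{1/2}Xf, \qquad f \in E. \]
To see that this prescription is unambiguous, note first that the hypothesis $AX=X^*A$ forces $X$ to leave $\ker A$ invariant: if $Af=0$ then $AXf = X^*Af = 0$. Since $A\ge 0$ one has $\ker A = \ker A^{1/2}$, so whenever $A^{1/2}f = A^{1/2}g$ we get $A^{1/2}X(f-g)=0$; hence $Y$ is well defined on $\mathrm{Ran}(A^{1/2})$.

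Next I would verify that $Y$ is symmetric on $\mathrm{Ran}(A^{1/2})$. Using that $A^{1/2}$ is self-adjoint together with $AX=X^*A$, for all $f,g\in E$ one computes
\[ [\,Y(A^{1/2}f),A^{1/2}g\,] = [\,A^{1/2}Xf,A^{1/2}g\,] = [\,AXf,g\,] = [\,X^*Af,g\,] = [\,Af,Xg\,] = [\,A^{1/2}f,A^{1/2}Xg\,] = [\,A^{1/2}f,Y(A^{1/2}g)\,]. \]

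The crux of the argument, and the step I expect to be the main obstacle, is proving that $Y$ is \emph{bounded}, i.e. that $\PI{Xf}{Xf}^{1/2}\le C\,\PI{f}{f}^{1/2}$, where $\PI{u}{v}=[Au,v]$ is the positive semidefinite form induced by $A$. Writing $p(f)=\PI{f}{f}^{1/2}$, the identity $p(Xf)^2 = [AXf,Xf] = [X^*Af,Xf] = \PI{f}{X^2f}$ combined with the Cauchy--Schwarz inequality for $\PI{\cdot}{\cdot}$ yields the log-convexity estimate $p(Xf)^2 \le p(f)\,p(X^2f)$. Setting $c_k=p(X^kf)$, this reads $c_k^2\le c_{k-1}c_{k+1}$, so the ratios $c_k/c_{k-1}$ are non-decreasing; together with the crude geometric bound $c_k\le \|A\|_{\b(E)}^{1/2}\,\|X\|_{\b(E)}^{k}\,[f,f]^{1/2}$ this gives $c_1/c_0\le \|X\|_{\b(E)}$ after taking $n$-th roots and letting $n\to\infty$ (the cases where some $c_k$ vanishes being handled separately). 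Hence $p(Xf)\le \|X\|_{\b(E)}\,p(f)$, that is, $Y$ is bounded on $\mathrm{Ran}(A^{1/2})$ with $\|Y\|\le \|X\|_{\b(E)}$. I note in passing that this boundedness is precisely Theorem \ref{symmetrizable results}(i) applied to the Hilbert space obtained by completing $E$ under $p$, so the result could also be quoted rather than re-derived.

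Finally, boundedness lets me extend $Y$ by continuity to $\overline{\mathrm{Ran}(A^{1/2})}$ and set $Y=0$ on the orthogonal complement $\ker A^{1/2}$. Since $Y$ then maps each summand into itself, is symmetric, and is bounded, it is self-adjoint on $E$, and $YA^{1/2}=A^{1/2}X$ holds on all of $E$ by construction. For uniqueness, if $Y'$ is another self-adjoint solution then $(Y-Y')A^{1/2}=0$, so $Y=Y'$ on $\mathrm{Ran}(A^{1/2})$, hence on its closure by continuity; in the situation of interest $A$ is injective (indeed $\PI{f}{f}=[Af,f]$ forces $\ker A=0$), so $\mathrm{Ran}(A^{1/2})$ is dense and $Y=Y'$ everywhere.
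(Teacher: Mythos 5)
The paper does not actually prove this statement: Theorem \ref{Quasiherm} appears in the background section as a quoted result of Dieudonn\'e \cite{dieudonne}, so there is no in-paper argument to compare yours against. Your proof is correct and self-contained, and it is essentially the classical Lax--Dieudonn\'e mechanism: define $Y$ on $\mathrm{Ran}(A^{1/2})$ by $Y(A^{1/2}f)=A^{1/2}Xf$, check well-definedness from the invariance of $\ker A=\ker A^{1/2}$ under $X$, and obtain boundedness from the estimate $[AXf,Xf]^{1/2}\le \|X\|_{\b(E)}\,[Af,f]^{1/2}$, proved by the log-convexity iteration $c_k^2\le c_{k-1}c_{k+1}$ together with the geometric bound on $c_n$ and an $n$-th root limit. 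All the steps check out, including the degenerate cases (if $c_0=0$ then $f\in\ker A$, hence $Xf\in\ker A$ and $c_1=0$; a first vanishing $c_k$ with $k\ge 2$ would force $c_{k-1}=0$), the symmetry computation, and the extension by continuity to $\overline{\mathrm{Ran}(A^{1/2})}$ and by zero on $\ker A$, which yields a bounded everywhere-defined symmetric, hence self-adjoint, operator. The one point worth flagging is uniqueness: the relation $YA^{1/2}=A^{1/2}X$ constrains $Y$ only on $\overline{\mathrm{Ran}(A^{1/2})}$, so if ``positive'' is read as merely positive semidefinite, the uniqueness assertion in the statement is false (any self-adjoint operator on $\ker A$ can be glued on); uniqueness holds exactly when $A$ is injective, i.e.\ $\mathrm{Ran}(A^{1/2})$ is dense. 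You identify this correctly, and in the paper's application $A$ implements the $L^2$ inner product on $H^1_0$ and is injective, so your resolution is the appropriate one. Your passing remark that boundedness could instead be quoted from Theorem \ref{symmetrizable results} $i)$ requires first passing to the quotient of $E$ by $\ker A$ (since $p$ is only a seminorm), so the direct derivation you give is actually the cleaner route.
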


\section{Basic facts on $\gg$}\label{basics}

From the definition of the group, it follows that any operator in $\gg$ extends to an isometry of $L^2$, which has a dense subset in its range, namely $H^1_0$.  Then,  operators belonging to $\gg$ extend to  unitary operators onto $L^2$. Thus one can describe $\gg$ alternatively as
$$
\gg=\{ \, G=U|_{H^1_0} \, : \, U\in U(L^2) \hbox{ such that } U(H^1_0)=H^1_0 \, \}.
$$
Moreover, there is a third  algebraic characterization of $\gg$.
Note that the sesqui-linear form $\PI{\,\cdot \,}{\,\cdot \,}$ is  bounded and positive definite in $H^1_0$, thus there exists a positive operator $A\in\b(H^1_0)$ such that
$$
\PI{f}{g}=[Af,g]=[f,Ag].
$$
Therefore, a straightforward computation shows that
\begin{equation}\label{operator A}
\gg=\{G\in Gl(H^1_0): G^*AG=A\}.
\end{equation}
From this characterization it becomes apparent that $\gg$ is a closed subgroup of $Gl(H^1_0)$.
We shall see in Section \ref{smooth} that it is a Banach-Lie group, and a submanifold of $\b(H^1_0)$. Its Lie algebra is
$$
\Gamma=\{\, X\in\b(H^1_0) \, : \, X^*A+AX=0 \,\}.
$$
Note that $X\in\Gamma$ if
$$
\PI{Xf}{g}=[AXf,g]=-[X^*Af,g]=-[Af,Xg]=-\PI{f}{Xg},
$$
i.e. if $X$ is antihermitic for the $L^2$-inner product. Therefore one has the following spatial characterization of $\Gamma$:
$$
\Gamma=\{\, X=Z|_{H^1_0} \,:  \, Z\in \b(L^2), \, Z^*=-Z, \, Z(H^1_0)\subset H^1_0 \, \}.
$$
In fact, if $X=Z|_{H^1_0}$ as above, then the operator $X:H^1_0 \to H^1_0$ verifies $\PI{Xf}{g}=-\PI{f}{Xg}$ for $f,g\in H^1_0$, and therefore is bounded in $H^1_0$ by the uniform boundedness principle. Conversely, if $X\in\b(H^1_0)$ satisfies $\PI{Xf}{g}=-\PI{f}{Xg}$,
then $iX$ lies in $\b(H^1_0)$ and it is symmetric for the $L^2$-inner product. It follows by  Theorem \ref{symmetrizable results} that $iX$ extends to a bounded self-adjoint operator on $L^2$.

\medskip

In order to understand $\gg$, it will be useful to provide some examples of elements in $\gg$. As is standard notation, if $f,g\in H^1_0$, denote by $f\otimes g$ the rank one operator in $\b(H^1_0)$ given by $f\otimes g(h)=[h,g]f$. Apparently, $(f\otimes g)^*=g\otimes f$, $\|f\otimes g\|=|f|_1|g|_1$, and if $B,C\in\b(H^1_0)$, $B(f\otimes g)C=Bf \otimes C^*g$.
\begin{ejems}\label{ejemplos basicos}\hspace{-0.5cm}
\begin{enumerate}
\item A straightforward verification shows that a unitary operator $U$ on $H^1_0$ which commutes with $A$, belongs to $\gg$. Conversely, if a unitary operator on $H^1_0$ belongs to $\gg$, then it commutes with $A$.
\item
Let $f\in H^1_0$ such that $|f|_2=1$. Then $f\otimes Af$ is a rank one idempotent:
$$
(f\otimes Af)^2=(f\otimes Af(f))\otimes Af=([f,Af]f)\otimes Af=\PI{f}{f}f\otimes Af=f\otimes Af.
$$
Note that $f\otimes Af$ extends to an orthogonal projection on $L^2$, $if\otimes Af\in\Gamma$ and
$$
e^{if\otimes Af}=e^if\otimes Af+(1-f\otimes Af)\in \gg.
$$
By the above remarks, $f\otimes Af$ is  an orthogonal projection on $H^1_0$ if and only if $f$ is an eigenvector of $A$.
\item
Let ${\cal S}$ be a finite dimensional subspace of $H^1_0$, and let $f_1,...,f_k$ a basis of ${\cal S}$ which is orthonormal for the $L^2$-inner product $\PI{\,\cdot \,}{\, \cdot \,}$. Then there exists a closed subspace ${\cal T}$ of $H^1_0$ such that ${\cal S} + {\cal T}=H^1_0$, and ${\cal S},{\cal T}$ are orthogonal for $\PI{\,\cdot \,}{\, \cdot \,}$. Indeed, let
$$
E=\sum_{j=1}^kf_j\otimes Af_j \in \b(H^1_0).
$$
Note that $E(f)=\sum_{j=i}^k\PI{f}{f_j}f_j$, i.e. $E$ is the $L^2$ orthogonal projection onto ${\cal S}$. Then ${\cal T}=\ker(E)$. Let $U_0$ be an operator in $\b({\cal S})$, which is isometric for the $L^2$-norm $| \, \cdot \, |_2$, and put
$$
G:H^1_0 \to H^1_0 ,\   G|_{\cal S}=U_0 \ \hbox{ and } G|_{{\cal T}}=1_{{\cal T}}.
$$
Then it is easy to check that $G\in\gg$.

\medskip

The former two examples consist of operators which are of the form $1$ $+$ compact (in fact finite rank). Let us show two examples which  are not of this form: multiplication and composition operators.
\item
Let $H^{1,\infty}(\Omega)$ be the space of complex-valued functions in $L^{\infty}(\Omega)$ such that their first partial derivatives in the distributional sense also belong to $L^{\infty}(\Omega)$. Pick $\theta \in H^{1,\infty}(\Omega)$ satisfying $|\theta(x)|=1$, and consider $M_\theta$ defined by
$$
M_\theta f(x)=\theta(x)f(x) , \ x\in\Omega.
$$
Then, $M_\theta$ is a linear operator which acts both in $L^2$ and $H^1_0$. It is a unitary operator in $L^2$, and preserves $H^1_0$:
clearly $M_\theta(H^1_0)\subset H^1_0$, and $(M_\theta)^{-1}(H^1_0)=M_{\bar{\theta}}(H^1_0)\subset H^1_0$, i.e. $M_\theta(H^1_0)=H^1_0$. It follows that $M_\theta\in\gg$.

\medskip

\item  Let $\psi: \Omega \to \Omega$ be  a volume-preserving $C^1$ diffeomorphism such that the partial derivatives $\psi_{x_j}$ and  $(\psi^{-1})_{x_j}$, $j=1, \ldots, n$, are bounded on $\Omega$. It is not difficult to show  that the operator $U_{\psi}: H^1_0 \to H^1_0$, $U_{\psi}(f)=f \circ \psi$ is an isomorphism (see e.g. \cite[Proposition 2.47]{treves}). Moreover, it also satisfies that
\[ \int_{\Omega} | U_{\psi}(f) |^{2} \, dx = \int_{\Omega} | f  \circ \psi |^{2} \, dx = \int_{\Omega} | f  |^{2} \, | det (D\psi ^{-1})  | \, dx =     \int_{\Omega} |f|^{2} \, dx , \]
where we have used that  $| det (D\psi ^{-1}) | \equiv 1$.  This shows that  $U_{\psi} \in \gg$.

\end{enumerate}
\end{ejems}

\subsection{Smooth structure}\label{smooth}

Now we prove the preceding statement about the Lie algebra of $\gg$.

\begin{lem}
The Lie algebra of $\gg$ is $\Gamma$.
\end{lem}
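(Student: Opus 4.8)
The plan is to identify the Lie algebra of $\gg$ with the set of infinitesimal generators of its one-parameter subgroups,
\[
\mathrm{Lie}(\gg)=\{\,X\in\b(H^1_0): e^{tX}\in\gg\ \text{for all }t\in\mathbb{R}\,\},
\]
and to prove that this set coincides with $\Gamma$. Since $\gg$ is a Banach-Lie group and a submanifold of $\b(H^1_0)$, this set is exactly the tangent space $T_I\gg$, so establishing $\mathrm{Lie}(\gg)=\Gamma$ identifies the Lie algebra.

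First I would prove $\Gamma\subseteq\mathrm{Lie}(\gg)$. Fix $X\in\Gamma$, so that $X^*A+AX=0$, and consider the analytic curve $F(t)=e^{tX^*}Ae^{tX}$ in $\b(H^1_0)$. Differentiating and using that $X^*$ commutes with $e^{tX^*}$ gives
\[
F'(t)=e^{tX^*}\,(X^*A+AX)\,e^{tX}=0,
\]
so $F(t)=F(0)=A$ for every $t$; as $e^{tX}$ is invertible, the characterization \eqref{operator A} yields $e^{tX}\in\gg$. For the reverse inclusion, if $e^{tX}\in\gg$ for all $t$ then $e^{tX^*}Ae^{tX}=A$ identically in $t$, and differentiating at $t=0$ gives $X^*A+AX=0$, i.e. $X\in\Gamma$. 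Thus $\mathrm{Lie}(\gg)=\Gamma$. I would also record that $\Gamma$ is a closed real subspace, being the kernel of the bounded real-linear map $X\mapsto X^*A+AX$, and that it is closed under the commutator: a short computation using $X^*A=-AX$ and $Y^*A=-AY$ gives $[X,Y]^*A+A[X,Y]=0$, confirming that $\Gamma$ is a real Banach-Lie subalgebra.

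The delicate point, and the step I expect to be the main obstacle, is justifying that $\mathrm{Lie}(\gg)$ really is the manifold tangent space, equivalently that $\gg$ is a genuine Banach-Lie submanifold near $I$ modelled on $\Gamma$. The natural route is the exponential chart: one shows that for $G\in\gg$ close enough to $I$ the principal logarithm $X=\log G$ lies in $\Gamma$, so that $\exp|_\Gamma$ is a local homeomorphism onto a neighbourhood of $I$ in $\gg$. From $G^*AG=A$ one gets $e^{X^*}A=Ae^{-X}$, and one is tempted to conjugate by $A$ and use uniqueness of the logarithm to conclude $X^*A=-AX$. However $A$ is injective and positive but not invertible --- for $\Omega$ bounded it is even compact, being essentially the resolvent $(I-\Delta)^{-1}$ --- so $A^{-1}$ is unavailable and this shortcut fails; the same non-invertibility wrecks the naive submersion argument, since the differential $X\mapsto X^*A+AX$ does not surject onto the self-adjoint operators.

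To circumvent the non-invertibility I would pass through the intertwining of Theorem \ref{Quasiherm}. Because $A^{1/2}$ is injective with dense range, the identity $[A^{1/2}f,A^{1/2}g]=\langle f,g\rangle$ makes it an isometry of $(H^1_0,\langle\,\cdot\,,\,\cdot\,\rangle)$ onto a dense subspace of $(H^1_0,[\,\cdot\,,\,\cdot\,])$, and Dieudonn\'e's theorem attaches to each symmetrizable operator a bona fide self-adjoint operator via $A^{1/2}X=YA^{1/2}$. This transports $\Gamma$ onto the skew-adjoint operators and $\gg$ onto a subgroup of the unitary group $U(H^1_0)$, whose Banach-Lie structure is classical. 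The remaining task --- the technical heart of the argument --- is to verify that this correspondence is a local diffeomorphism carrying the exponential chart of the unitaries to one for $\gg$; this simultaneously installs the submanifold structure and confirms that the model space, hence the Lie algebra, is precisely $\Gamma$.
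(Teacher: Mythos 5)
Your core argument is exactly the paper's proof of this lemma. The paper reduces the statement to showing $\Gamma=\{Y\in\b(H^1_0): e^{tY}\in\gg \hbox{ for all } t\in\mathbb{R}\}$, obtains the forward inclusion from $(X^*)^kA=(-1)^kAX^k$ (your differentiation of $F(t)=e^{tX^*}Ae^{tX}$ is an equivalent packaging of the same fact), and the reverse inclusion by differentiating $e^{tY^*}A=Ae^{-tY}$ at $t=0$. Your added observations that $\Gamma$ is a closed real subspace and closed under commutators are correct. As far as the paper is concerned, the lemma ends there: the chart/submanifold question you flag is deliberately deferred to Lemma \ref{lemaexponencial} and Proposition \ref{B-Lie}.

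The genuine problem is the route you propose for that deferred step, which you yourself call the technical heart. Transporting $\gg$ into $U(H^1_0)$ via Dieudonn\'e's theorem and $A^{1/2}$ is precisely the correspondence $G\mapsto U_G$ with $U_GA^{1/2}=A^{1/2}G$ of Proposition \ref{usubge}, and the paper proves (corollary to Proposition \ref{usubge}, resting on Proposition \ref{desi}) that this group isomorphism is continuous but its inverse is \emph{not}: there are multiplication operators $M_{\theta_n}\in\gg$ with $\|M_{\theta_n}-I\|_{\b(L^2)}\to 0$ while $\|M_{\theta_n}-I\|\not\to 0$. Hence $G\mapsto U_G$ cannot be a local homeomorphism, let alone a local diffeomorphism, so no exponential chart of the unitary group can be pulled back to a chart of $\gg$ in its $\b(H^1_0)$-topology; moreover the image ${\cal U}_{R(A^{1/2})}(H^1_0)$, the unitaries preserving a dense range, is not itself a Banach-Lie subgroup of $U(H^1_0)$, so there is no classical structure there to transport. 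The paper's actual resolution of your worry about the non-invertibility of $A$ is direct and avoids the unitary group altogether: for $G\in\gg$ whose $H^1_0$-spectrum misses a half-line from $0$, the Riesz functional calculus logarithm $X=\log(G)$ satisfies $X^*A=-AX$, proved by the contour-integral manipulation in Lemma \ref{lemaexponencial} (moving the adjoint and the conjugation through the integral and using $G^*A=AG^{-1}$), so $\exp$ restricted to $\Gamma$ maps a neighbourhood of $0$ bijectively onto a neighbourhood of $I$ in $\gg$ inside the standard exponential chart of $Gl(H^1_0)$; translations then cover the group (Proposition \ref{B-Lie}). If you replace your last paragraph by that logarithm argument, the whole account is sound.
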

\begin{proof}
In order to prove this assertion it suffices to show that
$$
\Gamma=\{Y\in\b(H^1_0): e^{tY}\in \gg \hbox{ for all } t\in\mathbb{R}\}.
$$
If $X\in\Gamma$, $(X^*)^kA=(-1)^kAX^k$. Then $(e^{tX})^*A=e^{tX^*}A=Ae^{-tX}=A(e^{tX})^{-1}$, i.e. $e^{tX}\in \gg$. Conversely, if $e^{tY}\in \gg$ for all $t$, we may differentiate the identity $e^{tY^*}A=Ae^{-tY}$ at $t=0$, to obtain $Y^*A=-AY$.
\end{proof}

\begin{lem}\label{lemaexponencial}
Let $G \in \gg$. The following assertions hold:
\begin{enumerate}
\item[i)] Let $\mathcal{L}$ be a half-line in the complex plane, from $0$ to infinity. If $\sigma_{H^1_0}(G)\cap \mathcal{L}=\emptyset$, then there exists $X \in \Gamma$ such that $e^X=G$.
\item[ii)] If $\|G-1\|\le 1$, then there exists $X\in\Gamma$ such that $e^X=G$.
\end{enumerate}
\end{lem}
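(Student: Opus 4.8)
The plan is to produce $X$ by the Riesz--Dunford holomorphic functional calculus and then to check that it lands in $\Gamma$. For part i), since $\sigma_{H^1_0}(G)$ is compact and misses the half-line $\mathcal L$, there is a branch of the logarithm, say $\log$, holomorphic on the slit plane $\mathbb C\setminus\mathcal L\supseteq\sigma_{H^1_0}(G)$. I would set $X=\log G=\frac{1}{2\pi i}\int_\gamma \log(\lambda)\,(\lambda-G)^{-1}\,d\lambda$, with $\gamma$ a contour in $\mathbb C\setminus\mathcal L$ winding once around $\sigma_{H^1_0}(G)$. Because $\exp\circ\log$ is the identity on $\mathbb C\setminus\mathcal L$, the homomorphism property of the functional calculus gives $e^X=G$ for free. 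The whole content of the statement is therefore the assertion $X\in\Gamma$, i.e. $X^*A+AX=0$ (adjoints taken in $H^1_0$).

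For that I would exploit the algebraic description $\gg=\{G:G^*AG=A\}$. Multiplying $G^*AG=A$ on the right by $G^{-1}$ gives the intertwining $G^*A=AG^{-1}$, and a one-line manipulation turns it into the resolvent identity $(\lambda-G^*)^{-1}A=A(\lambda-G^{-1})^{-1}$, valid for every $\lambda$ outside $\sigma_{H^1_0}(G^*)\cup\sigma_{H^1_0}(G^{-1})$. Taking adjoints in the integral defining $X$ and changing variables $\lambda\mapsto\bar\lambda$ shows that $X^*=h(G^*)$, where $h(\mu)=\overline{\log\bar\mu}$ is again a branch of the logarithm, now holomorphic off the reflected half-line $\overline{\mathcal L}$. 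Integrating the resolvent identity against $h$ then yields $X^*A=h(G^*)A=A\,h(G^{-1})$. Finally a bookkeeping check on the branches gives $h(1/\lambda)=-\log\lambda$ on $\sigma_{H^1_0}(G)$, so that $h(G^{-1})=-\log G=-X$ by the spectral mapping and composition rules; hence $X^*A=-AX$ and $X\in\Gamma$.

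Part ii) I would then deduce from i). The hypothesis $\|G-1\|\le1$ forces $\sigma_{H^1_0}(G)$ into the closed disc $\{|\lambda-1|\le1\}$, which meets the closed non-positive real axis only at $0$; since $G$ is invertible $0\notin\sigma_{H^1_0}(G)$, so $\sigma_{H^1_0}(G)$ avoids the half-line $\mathcal L=(-\infty,0]$ and part i) applies with the principal branch.

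The delicate point is not the existence of $X$ but the verification $X\in\Gamma$, and two features demand care. First, $A$ is injective with dense range but is not invertible---the $H^1_0$ and $L^2$ norms are inequivalent, so $0\in\sigma_{H^1_0}(A)$---hence one cannot simply conjugate $G^*=AG^{-1}A^{-1}$; the identity $X^*A=A\,h(G^{-1})$ must be obtained at the level of resolvents and then integrated, choosing a single contour that encircles both $\sigma_{H^1_0}(G^*)$ and $\sigma_{H^1_0}(G^{-1})$ inside the common domain of holomorphy $\mathbb C\setminus\overline{\mathcal L}$ of $h$. Second, one must keep scrupulous track of how the chosen branch of the logarithm behaves under the two operations $G\mapsto G^*$ (conjugation of the spectrum) and $G\mapsto G^{-1}$ (inversion of the spectrum); the identity $h(1/\lambda)=-\log\lambda$ is precisely the compatibility that makes these two operations cancel and delivers the skew relation $X^*A+AX=0$ defining $\Gamma$.
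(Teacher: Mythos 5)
Your proposal is correct and follows essentially the same route as the paper: define $X=\log G$ by the Riesz--Dunford functional calculus and verify $X^*A=-AX$ by pushing the intertwining relation $G^*A=AG^{-1}$ through the resolvents under the contour integral, with the branch identity $\log(1/\lambda)=-\log\lambda$ closing the argument, and part ii) reduced to part i) exactly as in the paper. The only cosmetic difference is that the paper first rotates $G$ so that $\mathcal{L}$ becomes the negative real axis and integrates over a conjugation-symmetric contour enclosing both $\sigma_{H^1_0}(G)$ and $\sigma_{H^1_0}(G^{-1})$, which makes your auxiliary branch $h$ coincide with $\log$ and shortens the branch bookkeeping you carry out by hand.
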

\begin{proof}
$i)$ We first note that one can consider $e^{i\theta}G$ in place of $G$, where $\theta$ is a suitable angle,   to reduce the proof to the case where $\mathcal{L}$ is the negative real axis.  Thus we will assume that $\mathcal{L}$ is the negative real axis.

Since $0 \notin \sigma_{H^1_0}(G)$ and $0 \notin \sigma_{H^1_0}(G^{-1})$, then it is possible to find a simple closed curve $\gamma$, which does not intersect $\mathcal{L}$ and contains $\sigma_{H^1_0}(G)$ and $\sigma_{H^1_0}(G^{-1})$ in its interior. In addition, we can choose $\gamma$ satisfying $\overline{\gamma}=\gamma$.
From the assumption $\sigma_{H^1_0}(G)\cap \mathcal{L}=\emptyset$, it follows that there is a well defined branch of  the logarithm, and $X=\log(G)$ can be defined using the Riesz functional calculus. If $\gamma$ is counterclockwise oriented, then
\begin{align*}
X^*A & = - \frac{1}{2\pi i} \int_{\gamma} \overline{\log(z)} \, (G^* - \overline{z})^{-1}A \, dz
= - \frac{1}{2\pi i} \int_{\gamma} \log(\overline{z}) A (G^{-1} - \overline{z})^{-1} \, dz \\
& =\frac{1}{2\pi i} \int_{\overline{\gamma}} \log(z) A (G^{-1} - z)^{-1} \, dz = A \log (G^{-1})=-AX.
\end{align*}
Hence $X \in \Gamma$, and the proof is complete.

\smallskip

\noindent $ii)$ Under the assumption $\|G-1\| \leq 1$, we have that $\sigma_{H^1_0}(G)$ does not intersect the negative real axis (note that $0\notin \sigma_{H^1_0}(G)$). Then the result can be deduced from $i)$.
\end{proof}

\noi This lemma allows us to exhibit local charts for $\gg$, modeled on $\Gamma$:
\begin{prop}\label{B-Lie}
 The group $\gg$ is a  real Banach-Lie group endowed with the norm topology of $\b(H^1_0)$.
\end{prop}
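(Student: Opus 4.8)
The plan is to build an atlas of local charts on $\gg$ modeled on the real Banach space $\Gamma$ and to read off analyticity of the group operations from that atlas. First I would record that $\Gamma$ is a closed real-linear subspace of $\b(H^1_0)$: if $X_n\to X$ in norm with $AX_n+X_n^*A=0$, then passing to the limit gives $AX+X^*A=0$, since $A$ is bounded and $Y\mapsto Y^*$ is norm-continuous; so $\Gamma$ is itself a real Banach space, the intended model. Next I would use that the exponential $\exp\colon\b(H^1_0)\to Gl(H^1_0)$ is analytic with differential the identity at $0$, so there is an open ball $W'$ about $I$ (say $\{G:\|G-I\|<1\}$) on which the principal logarithm $\log$ is defined, is analytic, and inverts $\exp$.

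The chart at the identity is $\varphi:=\log|_{\gg\cap W'}$, and the two points to verify are that $\varphi$ takes values in $\Gamma$ and is a homeomorphism onto a neighborhood of $0$ in $\Gamma$. The first is exactly Lemma~\ref{lemaexponencial}(ii): for $G\in\gg$ with $\|G-I\|<1$ the logarithm produced there lies in $\Gamma$, and on this ball it coincides with the principal branch used in the chart, both being the norm-convergent series $\sum_{k\ge1}(-1)^{k+1}(G-I)^k/k$. For the second, if $X\in\Gamma$ is small then the lemma identifying the Lie algebra gives $e^{X}\in\gg$, while $e^{X}\in W'$ forces $\log(e^{X})=X$; conversely $\exp(\varphi(G))=G$. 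Thus $\exp|_\Gamma$ and $\varphi$ are mutually inverse between suitable neighborhoods of $0\in\Gamma$ and $I\in\gg$, and both are norm-continuous, so $\varphi$ is a homeomorphism for the subspace (norm) topology.

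Charts at an arbitrary $G_0\in\gg$ are obtained by left translation: $\varphi_{G_0}(G)=\log(G_0^{-1}G)$ on the norm-neighborhood $\{G\in\gg:\|G_0^{-1}G-I\|<1\}$. Left multiplication by $G_0^{-1}\in\gg$ is a bounded linear, hence analytic, automorphism of $\b(H^1_0)$ that preserves the group $\gg$ and carries $G_0$ to $I$, so $\varphi_{G_0}$ is again a homeomorphism onto a neighborhood of $0$ in $\Gamma$. The transition between two such charts is $X\mapsto\log\!\big(G_1^{-1}G_2\,e^{X}\big)$, a composition of $\exp$, multiplication by the fixed element $G_1^{-1}G_2$, and $\log$, each analytic on the relevant open subsets of $\Gamma$; hence the transitions are analytic and $\gg$ becomes a real-analytic Banach manifold modeled on $\Gamma$ with the norm topology. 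Reading multiplication $(G,H)\mapsto GH$ and inversion $G\mapsto G^{-1}$ in these charts exhibits them as compositions of $\exp$, $\log$, and the analytic multiplication and inversion of $Gl(H^1_0)$, so $\gg$ is a real Banach--Lie group.

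The only genuinely non-formal ingredient is the local surjectivity of $\exp|_\Gamma$ onto a relative neighborhood of $I$ in $\gg$ together with the fact that the logarithm lands in $\Gamma$, which is precisely Lemma~\ref{lemaexponencial}; I therefore expect the main difficulty to be already resolved there, the remaining work being bookkeeping, namely matching the branch of the logarithm used in the lemma with the principal branch of the chart (legitimate once $\|G-I\|<1$) and checking that the chart topology agrees with the ambient norm topology. I would also remark that, because $A$ is in general not invertible on $H^1_0$ (the $H^1_0$- and $L^2$-norms being non-equivalent), the naive involution $T\mapsto A^{-1}T^*A$ is unavailable, so I do not attempt to split $\b(H^1_0)$ as $\Gamma$ plus a complement; the chart construction needs only charts valued in $\Gamma$, and thus the Banach--Lie group structure is obtained without addressing complementation, which would matter only for the stronger embedded-submanifold assertion.
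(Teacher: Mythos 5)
Your proof is correct and follows essentially the same route as the paper: an exponential chart at the identity whose only non-formal ingredient is Lemma \ref{lemaexponencial}, transported around the group by left translations, with analyticity of the transitions and of the group operations inherited from $Gl(H^1_0)$. The sole (immaterial) difference is that you chart on the norm ball $\|G-I\|<1$ using the power-series logarithm and part $ii)$ of that lemma, whereas the paper charts on the larger pair $\mathcal{U}$, $\mathcal{W}$ (operators with spectrum in the strip $\mathbb{R}+i(-\pi,\pi)$, respectively invertibles with spectrum off the negative real axis), where $\exp$ is an analytic bijection, and invokes part $i)$.
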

\begin{proof}
Let us consider the open subsets $\mathcal{U}=\{   \, X \in \b(H^1_0)  \,  :  \,  \sigma_{H^1_0}(X) \subseteq \mathbb{R} + i (-\pi, \pi)  \, \}$ and $\mathcal{W}=\{   \, G \in Gl(H^1_0)   \,  :  \, \arg (z) \in (-\pi, \pi), \, \forall \, z \in \sigma_{H^1_0} (G)  \, \}$. The exponential map of $Gl(H^1_0)$, i.e.
$$\exp: \mathcal{U} \to \mathcal{W}, \, \, \, \exp(X)=\sum_{i=0}^{\infty} \frac{X^n}{n!}  $$
is a real analytic bijection (see \cite[Lemma 2.11]{Upmeier}).
According to Lemma \ref{lemaexponencial} $i)$, it follows that $\exp(\mathcal{U} \cap \gg)=\mathcal{W}\cap \Gamma$.   Then a standard translation procedure can be used to cover $\gg$. The smoothness of the group operations follows from that of the group operations in $Gl(H^1_0)$.
\end{proof}

\begin{rem}
In the case where $\Omega=\mathbb{R}^n$, it was shown in \cite{CM11} that $\gg$ is an algebraic subgroup of $Gl(H^1_0)$. Hence the
Banach-Lie structure of $\gg$ followed from a general result on algebraic subgroups  (see \cite[Theorem 1]{harriskaup77}). It is noteworthy that $\gg$ is an algebraic subgroup of $Gl(H^1_0)$ for any open set $\Omega$,  and thus by the same result on algebraic subgroups, we have another proof of the smooth structure of $\gg$.
\end{rem}

\subsection{The relationship with the equation $u-\Delta u=h$}\label{laplace equation}

In this section we assume that $\Omega$ is bounded and $\partial \Omega$ is smooth. Let $f, g: \Omega \to \mathbb{C}$ be  $C^\infty$ functions with compact support contained in $\Omega$.  Note that these functions can be smoothly extended to  $\mathbb{R}^n$ by setting to be zero on the  complement of $\Omega$. Then,
$$
\PI{f}{g}=[Af,g]=\PI{Af}{g}+\int_\Omega\nabla Af(x)\cdot \nabla \bar{g}(x) dx,
$$
and by Green's formula,
$$
\PI{f}{g}=\PI{f}{Ag}- \int_\Omega Af \, \Delta \bar{g} dx= \PI{f}{A(g-\Delta g)}.
$$
Since this holds for any smooth function $f$, it follows that $A(g-\Delta g)=g$. Thus, if we denote by $h=g-\Delta g$, then $Ah=g$.
In other words, if $g$ is the unique solution of the non-homogeneous Helmholtz equation
\begin{equation}\label{la ecuacion}
\left\{ \begin{array}{l} u-\Delta u=h, \\ u|_{\partial \Omega}=0, \end{array} \right.
\end{equation}
then $Ah=g$. If $h \in H_0 ^1$, then $Ah=u_h$ is the weak solution of (\ref{la ecuacion}). That is, $1-\Delta$ is the unbounded right inverse of $A$ in $H^2_0$, or equivalently, $A$ is the solution operator of equation (\ref{la ecuacion}).
These facts are certainly well known (see e.g. \cite{treves}). Moreover, if $\Omega$ is bounded and $\partial \Omega$ is smooth, then $A$ is compact.
If $G\in \gg$, the equality $G^*AG=A$ can be interpreted as follows:
$G^*u_{Gh}=u_h$, or putting $h=G^{-1}f$,
$$
G^*u_f=u_{G^{-1}f},
$$
which means that $G$ intertwines solutions of (\ref{la ecuacion}).



\begin{ejem}
One simple example in which $A$ can be explicitly computed occurs when $\Omega=(0,1)$.  Let us compute its eigenvalues: $u_h=Ah=\lambda h$ implies that
$u_h-u_h''=\frac{1}{\lambda}u_h$, i.e.
$$
\left\{ \begin{array}{l} u_h''+(\frac{1}{\lambda}-1)u_h=0 \\ u_h(0)=u_h(1)=0 \end{array} \right.
$$
Then, $\lambda =(k^2\pi^2+1)^{-1}$ with eigenfunction $\sin(k\pi x)$. If we normalize these eigenfunctions in $H^1_0$, we get $s_k(x)=\frac{\sqrt2}{\sqrt{k^2\pi^2+1}}\sin(k\pi x)$ and
$$
A=\sum_{k=1}^{\infty} \frac{1}{k^2\pi^2+1}\  s_k\otimes s_k.
$$
\end{ejem}


\begin{ejem}
Let $\Omega\subset\mathbb R^2$ denote the open disk $x^2 + y^2 <1$. In this example, the eigenvalues and eigenfunctions of the Laplacian can be expressed in terms of  the Bessel functions $J_m$ ($m \geq 0$), which are defined by
\[  J_m(s)=\bigg( \frac{s}{2} \bigg)^m \, \sum_{p=0}^{\infty} \frac{(-1)^p}{\Gamma(p+1)\Gamma(m+p+1)} \bigg( \frac{s}{2} \bigg)^{2p}, \]
where $\Gamma$ stands for the Euler Gamma function. We refer the reader to  \cite[Example 34.2]{treves} for a detailed solution of the homogeneous Helmholtz equation in this example. It can be shown that the eigenvalues of the Laplace operator are given by  $\lambda= z_{m,j}^2$ for $m=0, 1, \ldots$ and $j=1, 2 , \ldots$, where $z_{m,j}$ denotes the positive zeros of $J_m$. It is convenient to express  the corresponding eigenfunctions in polar coordinates:
\[
   e_{m,j}(r,\theta) = \left\{
   \begin{array}{l l}
     J_m(z_{m,j}r)e^{im \theta} & \quad \text{if $m>0$,}\\
     J_0(z_{0,j}r) & \quad \text{if $m=0$,}\\
      J_{-m}(z_{-m,j}r)e^{im \theta} & \quad \text{if $m<0$.}
   \end{array} \right.
 \]
\medskip
According to formulas (5.14.6) and (5.14.9) in \cite{lebedev},
\[
\int_0 ^1 r J_m(z_{m,j}r)J_m(z_{m,k}r) \, dr =
\left\{
   \begin{array}{l l}
     0 & \quad \text{if j $\neq$ k,}\\
       \frac{1}{2} J_{m+1}^2(z_{m,j})    & \quad \text{if $j=k$.}
   \end{array} \right.
\]
Then, it follows that
\[   |e_{m,j}|_2= \sqrt{\pi} \, | J_{m+1}(z_{m,j})|.  \]
Hence, the solution operator is given by
\[ A = \sum_{m=-\infty}^{\infty} \sum_{j=1}^{\infty} \frac{1}{1+ z_{m,j}^2} \, s_{m,j} \otimes s_{m,j}\,,  \]
where the eigenfunctions $s_{m,j}=\frac{\sqrt{1+ z_{m,j}^2}}{\sqrt{\pi} |J_{m+1}(z_{m,j})|} \,   e_{m,j}$ are normalized in $H_0^1$.
\end{ejem}

\begin{ejem}\label{ela}
In the case $\Omega=\mathbb{R}^n$, $A$ can be explicitly computed. It is well known that $H_0 ^1 (\mathbb{R}^n)=H ^1 (\mathbb{R}^n)$ (see e.g. \cite[Proposition 24.9]{treves}), where the latter is the space of functions in $L^2(\mathbb{R}^n)$ with first partial (distributional) derivatives also belonging to $L^2(\mathbb{R}^n)$. A function $f\in H^1(\mathbb{R}^n)$ if and only if $(1+|\xi|^2)^{1/2}\hat{f}(\xi)\in L^2(\mathbb{R}^n)$, where $\hat{f}$ denotes the Fourier transform of $f$, and the inner product is given by
$$
[f,g]=\int_{\mathbb{R}^n} (1+|\xi|^2)\hat{f}(\xi)\overline{\hat{g}(\xi)} d\xi.
$$
Therefore, the solution operator is given by
$$
\widehat{Af} (\xi)=\frac{1}{1+|\xi|^2}\hat{f}(\xi).
$$
\end{ejem}

\subsection{The extension map}

\noindent  As we stated in the introduction, we may identify  $\gg$ with the subgroup of the unitary group $U(L^2)$ given by
\[  \mathcal{U}_{H_0^1}(L^2):= \{ \,W \in U(L^2)   \, : \, W(H^1_0)=H^1_0      \,  \}.    \]
In fact, the map
\begin{equation}\label{mapa grupos}
     \gg \longrightarrow \mathcal{U}_{H_0^1}(L^2), \, \, \, \, G \mapsto \bar{G}\, ,
\end{equation}
is a bijection, where $\bar{G}$ denotes the unique unitary operator $\bar{G}$ acting on $L^2$ which extends the operator $G \in \gg$. In what follows, we will endow $\mathcal{U}_{H_0^1}(L^2)$ with the operator norm topology of $\mathcal{B}(L^2)$, while $\gg$ will be considered  with the operator norm topology of $\mathcal{B}(H^1_0)$.

\medskip

\begin{prop}\label{desi}
Let $G_1,G_2 \in \gg$, then
\[  \|   \bar{G_1} -  \bar{G_2} \|_{\mathcal{B}(L^2)} \leq    \max\{ \,  \| G_1 ^{-1} \|  \, , \, \| G_2 ^{-1}  \| \,  \} \, \| G_1 - G_2 \|.   \]
In particular, the map in (\ref{mapa grupos}) is continuous. The inverse of this map is not continuous.
	\end{prop}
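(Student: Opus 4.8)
The plan is to obtain the continuity of the map \eqref{mapa grupos} directly from the displayed inequality, and to prove that inequality by reducing it to the single-operator estimate
\[ \|\bar{W}-I\|_{\mathcal B(L^2)}\le \|W-I\|, \qquad W\in\gg. \]
Indeed, setting $W:=G_2^{-1}G_1\in\gg$ and using that the extension map is multiplicative (both $\bar G_2\bar W$ and $\overline{G_2W}$ are unitaries on $L^2$ restricting to $G_2W$ on the dense subspace $H^1_0$, hence equal by uniqueness), one gets $\bar G_1-\bar G_2=\bar G_2(\bar W-I)$, so that $\|\bar G_1-\bar G_2\|_{\mathcal B(L^2)}=\|\bar W-I\|_{\mathcal B(L^2)}$ because $\bar G_2$ is unitary. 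Granting the single-operator estimate, this last quantity is at most $\|W-I\|=\|G_2^{-1}(G_1-G_2)\|\le \|G_2^{-1}\|\,\|G_1-G_2\|$; running the same argument with $W=G_1^{-1}G_2$ produces the bound with $\|G_1^{-1}\|$, and the stated inequality (in fact with the minimum of the two norms) follows.

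The single-operator estimate is where I expect the real work to be, and I would argue it spectrally. Since $W\in\gg$, its extension $\bar W$ is unitary on $L^2$, so $\bar W-I$ is a normal operator and therefore $\|\bar W-I\|_{\mathcal B(L^2)}=\sup\{\,|\lambda-1|:\lambda\in\sigma_{L^2}(\bar W)\,\}$. The crux is the spectral inclusion $\sigma_{L^2}(\bar W)\subseteq\sigma_{H^1_0}(W)$. To get it, note that $W$ is a proper operator: the $L^2$-adjoint of $\bar W$ is $\bar W^{-1}=\overline{W^{-1}}$, whose restriction to $H^1_0$ is $W^{-1}\in\b(H^1_0)$, so $W^+=W^{-1}$. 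Theorem \ref{proper op} then gives $\sigma_{L^2}(\bar W)\subseteq\sigma_{H^1_0}(W)\cup\overline{\sigma_{H^1_0}(W^{-1})}$. Now every $\lambda\in\sigma_{L^2}(\bar W)$ satisfies $|\lambda|=1$; if $\lambda\in\overline{\sigma_{H^1_0}(W^{-1})}$, then $\bar\lambda\in\sigma_{H^1_0}(W^{-1})=\{\,\mu^{-1}:\mu\in\sigma_{H^1_0}(W)\,\}$, whence $1/\bar\lambda\in\sigma_{H^1_0}(W)$, and $|\lambda|=1$ forces $1/\bar\lambda=\lambda$, so $\lambda\in\sigma_{H^1_0}(W)$ anyway. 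Thus the second set is superfluous and the inclusion holds. Consequently $\|\bar W-I\|_{\mathcal B(L^2)}\le\sup\{\,|\lambda-1|:\lambda\in\sigma_{H^1_0}(W)\,\}$, which is the spectral radius of $W-I$ in $\b(H^1_0)$ and hence at most $\|W-I\|$.

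The continuity of \eqref{mapa grupos} is then immediate: if $G_n\to G$ in $\b(H^1_0)$, then $\|G_n^{-1}\|$ stays bounded (inversion is continuous on $Gl(H^1_0)$), so $\|\bar G_n-\bar G\|_{\mathcal B(L^2)}\le \max\{\|G_n^{-1}\|,\|G^{-1}\|\}\,\|G_n-G\|\to 0$. For the discontinuity of the inverse I would exhibit operators that are close in the $L^2$-operator norm but far apart in $\b(H^1_0)$, using the multiplication operators of Example \ref{ejemplos basicos}. Fix a ball $B\subseteq\Omega$ and a real-valued $\eta\in C_c^\infty(B)$ with $\eta\not\equiv 0$, and set $\theta_n=\exp\!\big(\tfrac{i}{n}\,\eta\,\sin(n^2 x_1)\big)$ on $B$ and $\theta_n\equiv 1$ off $B$. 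Then $|\theta_n|\equiv 1$ and $\theta_n\in H^{1,\infty}(\Omega)$, so $G_n:=M_{\theta_n}\in\gg$, while $\|\bar G_n-\bar I\|_{\mathcal B(L^2)}=\|\theta_n-1\|_{L^\infty}\le \tfrac1n\|\eta\|_{L^\infty}\to 0$. On the other hand, testing $M_{\theta_n}-I$ on a fixed $f\in C_c^\infty(B)$ with $\eta f\not\equiv 0$ and examining the gradient term $\partial_{x_1}\!\big((\theta_n-1)f\big)=(\partial_{x_1}\theta_n)f+(\theta_n-1)\partial_{x_1}f$, the leading summand is comparable to $n\,\eta\cos(n^2 x_1)\,e^{i\phi_n}f$, whose $L^2$-norm is of order $n$, whereas the remaining terms stay bounded; hence $|(M_{\theta_n}-I)f|_1\to\infty$ with $|f|_1$ fixed, so $\|G_n-I\|\to\infty$. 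Thus $\bar G_n\to\bar I$ while $G_n\not\to I$, and the inverse of \eqref{mapa grupos} is not continuous.
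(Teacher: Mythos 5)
Your proof is correct and takes essentially the same route as the paper: the displayed inequality is obtained by passing to the $L^2$-spectral radius via normality of a difference of unitaries and then invoking the spectral inclusion of Theorem \ref{proper op} for proper operators (with $G^+=G^{-1}$), and the discontinuity of the inverse is exhibited with oscillating multiplication operators $M_{\theta_n}$, exactly as in the paper. The only differences are refinements, not a different method: using that $\sigma_{L^2}(\bar{W})$ lies in the unit circle lets you absorb $\overline{\sigma_{H^1_0}(W^{-1})}$ into $\sigma_{H^1_0}(W)$ and thereby sharpen the constant to $\min\{\|G_1^{-1}\|,\|G_2^{-1}\|\}$ instead of the paper's maximum, and your localized phases make $\|M_{\theta_n}-I\|$ diverge, whereas the paper's choice $\theta_n(x)=e^{i\sin(nx_1)/n}$ only keeps it bounded away from zero.
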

\begin{proof} Note that operators in $\gg$ are proper (see Section \ref{2 norms}).  In particular, note that $G^+=G^{-1}$ for any $G \in \gg$. Denote by $r_{H^1_0}(X)$   and $r_{L^2}(Y)$, respectively,      the spectral radius of $X \in \b(H^1_0)$ and   $Y \in \b(L^2)$. Then,
\begin{align*}
\|  \bar{G_1} -  \bar{G_2} \|_{\b(L^2)} & = \| 1 -  \bar{G_1} ^{-1}  \bar{G_2} \|_{\b(L^2)} \\
& = r _{L^2}(\, 1 -  \bar{G_1} ^{-1}  \bar{G_2}  \, ) \hspace{1cm} \text{(since $1 -  \bar{G_1} ^{-1}  \bar{G_2}$ is normal)}\\
& \leq \max\{  \, r_{H^1} (\, 1 - G_1 ^{-1} G_2    \,) \,  , \,  r_{H^1} (\, (1 - G_1 ^{-1} G_2)^+)    \} \hspace{0.5cm} \text{(by Theorem \ref{proper op} $ii)$)}\\
& = \max\{  \, r_{H^1} (\, 1 - G_1 ^{-1} G_2    \,) \,  , \,  r_{H^1} (\, 1 - G_2^{-1} G_1\,)    \} \\
& \leq \max\{  \, \| 1 - G_1 ^{-1} G_2   \| \,  , \,   \|1 - G_2 ^{-1} G_1 \|   \, \} \\
& \leq \max\{ \,\| G_1^{-1} \| \, , \, \|  G_2^{-1} \|  \,  \} \, \| G_1 - G_2 \|\,.
\end{align*}
Combining the preceding inequality with the continuity of the inversion map on $\gg$ gives that  the map in (\ref{mapa grupos}) is continuous.

Now we are going to prove that the inverse of the extension map is not continuous for any open subset $\Omega$ of $\mathbb{R}^n$. Let $x \in \Omega$ and $C:=(a_1,b_1) \times \ldots \times (a_n,b_n) \subset \Omega$ be a neighborhood of $x$.  Consider the following sequence of smooth functions
$$\theta_n: \Omega \to \mathbb{C}, \, \, \, \, \,  \theta_n(x_1, \ldots,  x_n)=e^{i \, \frac{\sin(nx_1)}{n}}.$$
Then, as we remarked in the fourth example of \ref{ejemplos basicos}, the multiplication operators $M_{\theta_n}$ belong to $\gg$. Given any $f \in L^2$ such that $|f|_2=1$, note that
\begin{align*}
| M_{\theta_n} (f) - f |_2 ^2 & = \int_{\Omega} |e^{i \, \frac{\sin(nx_1)}{n}} - 1|^2 \, |f(x)|^2 \, dx \\
& = 2 \int_{\Omega} \bigg(1 - \cos\bigg( \frac{\sin(nx_1)}{n} \bigg)  \bigg) \, |f(x)|^2 \, dx \leq
2 \, \bigg\|1 - \cos\bigg( \frac{\sin(nx_1)}{n} \bigg)   \bigg\|_{\infty} \to 0.
\end{align*}
Thus, $\| M_{\theta_n} - I \|_{\b(L^2)} \to 0$. On the other hand, let $f$ be a $C^{\infty}$ function with compact support such that $f(x) \equiv 1$ for $x \in C$.
We have
\begin{align*}
| M_{\theta_n} (f) - f |_1 ^2 & \geq  \int_C \nabla \theta_n f \cdot \nabla \bar{\theta}_n \bar{f} \, dx = \int_C \cos^2(n x_1) \, dx \\
& = (b_2 - a_2)\ldots (b_n - a_n) \bigg( \frac{1}{2n} \cos(nx_1)\sin(nx_1) + \frac{x_1}{2} \bigg|_{a_1} ^{b_1}    \bigg) \\
& \to \frac{1}{2}(b_1 - a_1)(b_2 - a_2)\ldots (b_n - a_n) >0,
\end{align*}
so that $\| M_{\theta_n} - I \| \nrightarrow 0$, and this shows that the inverse of the extension map is not continuous.
\end{proof}

\section{Norms and spectra of elements in $\gg$}\label{section spectra}

Note that if $G\in\gg$, the equality $G^*AG=A$ implies that $\|A\|\le \|A\|\|G\|^2$, and thus $\|G\|\ge 1$. Examining the previous examples, it can be shown that there are elements in $G$ with arbitrarily large norm.
\begin{ejems}
\begin{enumerate}
\item
Consider $\Omega=(0,1)$ and pick $f(x)= \sin(\pi x)+\sin(k\pi x)$. Clearly $|f|_2=1$. Thus, as in the second example of the first section, $G=e^{if\otimes Af}=e^if\otimes Af+(1-f\otimes Af)\in \gg$. Apparently,
$$
\|G\|\ge \max\{\|f\otimes Af\|, \|1-f\otimes Af\|\} \ge \|f\otimes Af\|=|f|_1|Af|_1 \,.
$$
A straightforward computation shows that
$$
|f|^2_1 |Af|^2_1=\frac14 \bigg(2+\pi^2(k^2+1)\bigg) \bigg( \frac{1}{\pi^2+1}+\frac{1}{k^2\pi^2+1} \bigg).
$$
Therefore, for large $k$, the norm of $G$ can be arbitrarily big.
\item Let $\Omega$ be an open and bounded subset of $\mathbb{R}^n$ such that $\partial \Omega$ is smooth. Under this assumptions, the operator $A$ is compact. Let $f \in H_0^1$ such that $|f|_1=1$ and $Af=\lambda f$, for some $\lambda \neq 0$.
Set $\theta:\Omega\subset \mathbb{R}^n\to \mathbb{C}$, $\theta(x)=e^{ik(x_1+...+x_n)}$ for some $k \in \mathbb{R}$. Then,
\begin{align*}
|M_\theta f|^2 _1 & = \PI{\theta f}{\theta f}+ \int_\Omega \nabla \theta f \cdot \nabla \bar{\theta} \,  \bar{f}  \, dx \\
& =  \PI{f}{f}+\int_\Omega  |f|^2 \, |\nabla  \theta |^2 + 2 Re \bar{f} \, \theta \, \nabla f \cdot \nabla \bar{\theta} + |\nabla f|^2 \, dx\\
&=[f,f]+\int_\Omega nk^2 |f|^2 +2f \, \nabla f \cdot \vec{k} \, dx
=1+nk^2 \lambda +k\sum_{j=1}^n\int_\Omega f  \frac{\partial f}{\partial x_j} \, dx,
\end{align*}
where $\vec{k}=(k,...,k)$. Since $f \in H_0^1$, integrating by parts,
$$
\int_\Omega f \frac{\partial f}{\partial x_j} \, dx=-\int_\Omega \frac{\partial f}{\partial x_j} f\, dx=0.
$$
Therefore,
$$
\|M_\theta\|\ge \sqrt{1+nk^2 \lambda}.
$$
\end{enumerate}
\end{ejems}

\medskip

Since any operator $G \in \gg$ can be extended to a unitary operator $\bar{G}$ on $L^2$ such that $\bar{G}(H^1_0)=H^1_0$, it is clear that  operators in $\gg$ are proper and $G^+=G^{-1}$. Thus by Theorem \ref{proper op} we know that
\begin{equation}\label{espectros}
\sigma_{L^2}(\bar{G})\subset \sigma_{H^1_0}(G)\cup \overline{\sigma_{H^1_0}(G^{-1})}.
\end{equation}
Let us examine now the spectra of the examples  \ref{ejemplos basicos}.
\begin{ejems}
\begin{enumerate}
\item
If $U$ is a unitary in $H^1_0$ which commutes with $A$, then clearly
$$
\sigma_{H^1_0}(U)=\sigma_{L^2}(U),
$$
and  it is a subset of $\mathbb{T}=\{z\in\mathbb{C}: |z|=1\}$.
\item
Examples 2 and 3 in Section \ref{ejemplos basicos} are constructed as operators $G$ acting on a $L^2$-orthogonal decomposition of $H^1_0$, $H^1_0={\cal S} + {\cal T}$, with ${\cal S}$ finite dimensional, and the operators acting as the identity on ${\cal T}$. Therefore their spectra in $\b(H^1_0)$ are finite, and consist of eigenvalues, and therefore are also eigenvalues of the extension $\bar{G}$ of $G$ to $L^2$. In particular they are elements of $\mathbb{T}$. It is apparent by construction, that also in this case both spectra coincide.
\item
As in Example \ref{ejemplos basicos}.4, consider $M_\theta$, where $\theta:\Omega\to \mathbb{C}$ is an element of $H^{1,\infty}(\Omega)$, now with $|\theta(x)|=1$. Clearly, $\sigma_{L^2}(M_\theta)=\mathcal{R}(\theta)\subset \mathbb{T}$, the essential range of $\theta$. Also in this case the spectra coincide (though none of the elements are eigenvalues). Indeed, if $M_\theta-\lambda =M_{\theta-\lambda}$ is invertible in $\b(L^2)$, then the function $\theta -\lambda$ does not vanish in $\Omega$, and moreover $(\theta-\lambda)^{-1}$ is also in $H^{1,\infty}(\Omega)$. Then the operator $M_{(\theta-\lambda)^{-1}}$, the inverse of $M_\theta-\lambda $ on $L^2$, defines a bounded operator in $H^1_0$, and therefore $M_\theta-\lambda $ is invertible on $H_0^1$.
Conversely, suppose that $M_{\theta}-\lambda $ is invertible in $\b(H^1_0)$, and let $B$ be its inverse. Since $M_{\theta} - \lambda$ is proper, it follows that
$B$ is proper if and only if $M_{\bar{\theta}} - \bar{\lambda}= (M_{\theta}-\lambda )^+$ is invertible in $\b(H_0^1)$ (see \cite[p. 148]{gz}). Using that $M_{\theta}-\lambda $ is bijective, it is straightforward to show that $M_{\bar{\theta}} - \bar{\lambda}$ is also bijective. By the open mapping theorem, $M_{\bar{\theta}} - \bar{\lambda}$
is invertible in $B(H_0^1)$. Therefore $B$ is proper, and by Theorem \ref{proper op}, it has a bounded extension to $L^2$. Hence $\lambda \notin \sigma_{L^2}(M_{\theta})$.
\end{enumerate}
\end{ejems}

\begin{ejem}
The examples of elements of $\gg$ so far have spectra   in $\mathbb{T}$. There is an example by Gohberg and Zambicki\v{\i} \cite{gz}, adapted by Barnes in \cite{barnes} to the case of a pair of Hilbert space norms, of an operator whose extension is symmetric, but whose spectrum does not lie in the real line. Namely, in this latter form, Barnes considers the Hilbert space $\ell^2$, and the dense subspace $\ell_0^2$, consisting of sequences $(a_n)_n$ such that $\sum_{n= 1} ^{\infty} 4^na_n^2<\infty$. Comparing with our situation, one has $[\ , \ ]$ in $\ell_0^2$, given by
$$
[a,b]=\sum_{n=1}^{\infty} 4^na_n\bar{b}_n,
$$
which makes $\ell_0^2$ a (complete)  Hilbert space, and the usual inner product $\PI{\, \cdot \,}{\, \cdot \,}$  of $\ell^2$, which is bounded in $\ell_0^2$. This latter inner product is implemented by a diagonal compact operator $A$, whose eigenvalues are $\frac{1}{4^n}$.


The above counterexample does not apply to our situation, where the operator $A$ is the solution operator. Let us reconstruct below the analogue of Barnes' example, and show that in our context, its spectrum is real. Consider $\Omega=(0,1)$,
$$e_k(x)=\sqrt2 \sin(k\pi x) \ , \hbox{ and }\  s_k(x)=\frac{1}{\gamma_k}e_k(x),
$$
the eigenvectors of $A$, normalized, respectively, in $L^2$ and $H^1_0$ (where $\gamma_k=\sqrt{k^2\pi^2 + 1}$). Let $T=S+B$ in $L^2$, where $S$ is the unilateral shift and $B$ is the backward-shift. Thus $T$ is self-adjoint in $L^2$, and $\sigma_{L^2}(T)\subset \mathbb{R}$. Apparently, $T(H^1_0)\subset H^1_0$. Indeed,
$$
S(s_k)=\frac{1}{\gamma_k}S(e_k)=\frac{1}{\gamma_k}(e_{k+1})=\frac{\gamma_{k+1}}{\gamma_k}s_{k+1}.
$$
Analogously $B(s_k)=\frac{\gamma_{k-1}}{\gamma_k}s_{k-1}$ (putting $e_0=s_0=0$).
Thus if $f=\sum_{k=1}^Nc_ks_k$,
$$
|Sf|_1 ^2=\big| \,\sum_{k=1}^N c_k\frac{\gamma_{k+1}}{\gamma_k}s_{k+1}\, \big|_1 ^2=\sum_{k=1}^N |c_k|^2\frac{\gamma_{k+1}^2}{\gamma_k^2}\, .
$$
The fractions $\frac{\gamma_{k+1}^2}{\gamma_k^2}$ are bounded by $2$. Thus
$$
|Sf|^2 _1 \le 2 \sum_{k=1}^N |c_k|^2= 2|f|^2 _1.
$$
It follows that $S$ is bounded in $H^1_0$ (and $\|S|_{H^1_0}\|\le \sqrt2$). Analogously $B$ is bounded in $H^1_0$ (with $\|B|_{H^1_0}\|\le 1$, because $\frac{\gamma_{k-1}^2}{\gamma_k^2}\le 1$). We claim that the spectrum of $T$ in $H^1_0$ is real, and coincides with its spectrum in $L^2$ (the analogous of $T$ in $\ell^2_0$ has non real spectrum). Indeed,  let $T'$ in $H^1_0$ be given by
$$
T'(s_k)=s_{k-1}+s_{k+1}.
$$
Clearly $T'$ is self-adjoint in $H^1_0$. Let $T'_N$ be given by
$T'_N(s_k)$ equal to $T(s_k)$, if $k\le N$, and to $T'(s_k)$ if $k\ge N+1$. Since $T'$ and $T'_N$ differ on a finite dimensional subspace, their essential spectra coincide: $\sigma_e(T'_N)=\sigma_e(T')\subset \mathbb{R}$. On the other hand $T-T'_N(s_k)=0$ if $k\le N$, and
$$
T-T'_N(s_k)=(\frac{\gamma_{k-1}}{\gamma_k}-1 )s_{k-1}+( \frac{\gamma_{k+1}}{\gamma_k}-1 )s_{k+1},
$$
if $k\ge N+1$. In our case, where $\Omega=(0,1)$, these fractions tend to $1$. Therefore $\|T-T'_N\|$ tends to $0$. By the semicontinuity property of the (essential spectrum), this implies that $\sigma_e(T)\subset \mathbb{R}$. It was proved in \cite{barnes}, that an extendable (or proper) operator, whose extension is self-adjoint, such as $T$, has the property that
$\sigma_{H^1_0}(T)\setminus\sigma_e(T)$ consists of isolated eigenvalues of finite multiplicity. As remarked before, these eigenvalues are necessarily real. It follows that $\sigma_{H^1_0}(T)\subset \mathbb{R}$.
\end{ejem}

However, modifying the example above one can obtain an element of $\gg$ whose spectrum as an operator of $H^1_0(\Omega)$ is not contained in $\mathbb{T}$.

\begin{ejem}\label{non real}
Let $\Omega$ be a bounded domain in $\mathbb{R}^n$ such that $\partial \Omega$ is smooth. We will show an example of a symmetrizable operator belonging to $i \Gamma$ with non real spectrum. In particular, this implies the existence of an operator in $\mathbb{G}$ with spectrum not contained in $\mathbb{T}$.

Let $A$ be the solution operator of equation (\ref{la ecuacion}), whose eigenvalues are related to the eigenvalues of the Laplacian in $\Omega$. It is a classical result by Hermann Weyl  in 1911 \cite{weyl}, that the eigenvalues of the Laplacian of a bounded domain $\Omega$ in $\mathbb{R}^n$ grow as
$$
\mu_k \sim  4 \pi \bigg(\frac{\Gamma(\frac{n}{2} + 1)}{|\Omega|}\bigg)^{2/n} \, k^{2/n},
$$
as $k\to \infty$. Since $\Omega$ is bounded and $\partial \Omega$ smooth, $A$ is compact, and consequently, there exists an orthonormal basis of eigenfunctions $(e_k)_k$   of $L^2$. Moreover, the eigenfunctions $(e_k)_k$ belong to $H^1_0$. By a straightforward computation taking into account the relationship between the $L^2$ and $H^1_0$ inner products,  it follows that  $s_k=\frac{e_k}{\gamma_k}$ is an orthonormal basis of $H^1_0$, where $\gamma_k=\sqrt{1 + \mu_k}$.


The orthonormal basis $(e_k)_k$ can be used to define the following bounded operator
$$S: L^2 \to L^2,  \, \, \, \, \, \, \, \, \, \, \, \, S(e_k)=e_{2k}\, .$$
Set $B=S^*$. Note that
$$ B(e_k)=\left\{ \begin{array}{l} 0, \, \,\, \,\, \,\, \, \,   \, \, \, \, \text{for $k$ odd,} \\ e_{k/2}, \, \, \, \, \text{ for $k$ even.} \end{array} \right.  $$
Then $T= S + B$ is a self-adjoint operator on $L^2$, so that $\sigma_{L^2}(T) \subseteq \mathbb{R}$. On the other hand,  for any $f \in H^1_0$, $f=\sum_{k=1}^{\infty}c_k s_k$, it is easily seen that
\[  |Sf|^2 _1= \bigg| \sum_{k=1}^{\infty} c_k \frac{\gamma_{2k}}{\gamma_k} s_{2k} \bigg|^2 _1 = \sum_{k=1}^{\infty} | c_k \frac{\gamma_{2k}}{\gamma_k} |^2 \leq K |f|^2 _1,  \]
where $K$ is a constant that bounds the convergent sequence $(\gamma_{2k} / \gamma_k)_k$. In a similar fashion, one can see that $B$ is bounded on $H^1_0$.
Hence $T(H^1_0) \subseteq H^1_0$, and $T$ turns out to be bounded on $H^1_0$. The expression of $T$ in the orthonormal basis of $H^1_0$ is given by
\[ T(s_k)=
\left\{ \begin{array}{l} \frac{\gamma_{2k}}{\gamma_k} \, s_{2k}, \, \,\, \, \,\, \, \,\, \,\, \,\, \, \, \,\, \,\, \,\, \, \, \,\, \,\, \,\, \, \,   \, \, \, \, \text{for $k$ odd,} \\
\frac{\gamma_{2k}}{\gamma_k}s_{2k} +  \frac{\gamma_{k/2}}{\gamma_k} \, s_{k/2}, \, \, \, \, \text{ for $k$ even.}
\end{array} \right. \]
We claim that $\sigma_{H^1_0}(T)$ contains all the points inside and on the ellipse
\[  \lambda= \sqrt[n]{2}e^{i\theta} + \frac{1}{\sqrt[n]{2}}e^{-i\theta}, \, \, \, \, \, \theta \in [0,2\pi]. \]
To this end, note that due to Weyl's asymptotic formula,
$$ \lim_{k \to \infty} \frac{\gamma_{2k}}{\gamma_k}=\sqrt[n]{2} \, \, \, \, \, \, \text{and} \, \, \, \, \, \, \lim_{k \to \infty} \frac{\gamma_{k/2}}{\gamma_k}=\frac{1}{\sqrt[n]{2}}.$$
Consider

\[ T'(s_k)=
\left\{ \begin{array}{l}  \sqrt[n]{2} s_{2k}, \, \,\, \, \,\, \, \,\, \,\, \,\, \, \, \,\, \,\, \,\, \, \, \,\, \,\, \,\, \, \,   \, \, \, \, \text{for $k$ odd,} \\
\sqrt[n]{2} s_{2k} +  \frac{1}{\sqrt[n]{2}} \, s_{k/2}, \, \, \, \, \text{ for $k$ even.}
\end{array} \right. \]
Then  the operators $T'_N$ defined by $T_N '(s_k)=T(s_k)$ if $k \leq N$, and $T_N ' (s_k)=T' (s_k)$ if $k \geq N+1$, satisfy $\| T_N ' - T\| \to 0$. Thus $\sigma_e(T)= \sigma_e(T_N ')=\sigma_e(T ')$, by the semicontinuity of the essential spectrum. So what is left is to show that the ellipse is contained in $\sigma_e(T ')$.  To prove the latter, note that the subspace
 \[  \mathcal{S}=\text{span}\{ \,  s_{2^k}   \, : \,  k \geq 0 \,    \}  \]
reduces $T'$. Then it follows that $\sigma_e(P_\mathcal{S}T'|_\mathcal{S}) \subseteq \sigma_e(T')$, where $P_\mathcal{S}$ denotes the orthogonal projection onto $\mathcal{S}$. But $P_\mathcal{S}T'|_\mathcal{S}$ is a Toeplitz operator  with $\sqrt[n]{2}$ under the diagonal and $1/ \sqrt[n]{2}$ over the diagonal. Thus $\sigma_e(T ')=\sigma_{H^1_0}(P_\mathcal{S}T'|_\mathcal{S})$, and according to a result by M. G. Krein \cite[Theorem 13.2]{MGKrein}, the spectrum of this Toeplitz operator is the above defined ellipse.
\end{ejem}

\subsection{Image of the exponential map}\label{Image}

The first examples of elements in $\gg$ given in Section \ref{ejemplos basicos} were operators $G=G_0\oplus I_{\cal T}$, where $G_0$ acts in ${\cal S}$ with $dim {\cal S}<\infty$, and ${\cal S}+{\cal T}=H^1_0$ an $L^2$-orthogonal sum.

\begin{prop}
If $G=G_0\oplus I_{\cal T}\in \gg$ as above, then there exists a finite rank operator $Z\in\Gamma$ such that
$$
G=e^Z.
$$
\end{prop}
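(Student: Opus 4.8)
The plan is to reduce the whole statement to the finite-dimensional subspace ${\cal S}$ and then invoke the surjectivity of the exponential map for finite-dimensional unitary groups. First I would note that since $G=G_0\oplus I_{\cal T}\in\gg$ and the subspaces ${\cal S},{\cal T}$ are $L^2$-orthogonal, the block $G_0$ must be an $L^2$-isometry of ${\cal S}$: for $f,g\in{\cal S}$ one has $\PI{G_0f}{G_0g}=\PI{Gf}{Gg}=\PI{f}{g}$. As ${\cal S}$ is finite dimensional, this isometry is a unitary operator on the finite-dimensional Hilbert space $({\cal S},\PI{\,\cdot\,}{\,\cdot\,})$.

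Next I would diagonalize $G_0$ by the spectral theorem on ${\cal S}$, writing $G_0=\sum_j e^{i\theta_j}P_j$ with $\theta_j\in\mathbb{R}$ and with $P_j$ the ($L^2$-orthogonal) spectral projections in ${\cal S}$. Setting $Z_0=\sum_j i\theta_j P_j$ yields a skew-Hermitian operator on $({\cal S},\PI{\,\cdot\,}{\,\cdot\,})$ with $e^{Z_0}=G_0$. This is the only genuinely nontrivial ingredient, and it is classical: every finite-dimensional unitary is the exponential of a skew-Hermitian operator.

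It then remains to promote $Z_0$ to an operator on all of $H^1_0$ and check it lies in $\Gamma$. Let $E=\sum_{j}f_j\otimes Af_j\in\b(H^1_0)$ be the $L^2$-orthogonal projection onto ${\cal S}$ from Example \ref{ejemplos basicos}.3; it is finite rank with $\ker E={\cal T}$, and being finite rank it is automatically $H^1_0$-bounded. I would then define $Z=Z_0E$. By construction $Z$ is finite rank, $Z|_{\cal S}=Z_0$ and $Z|_{\cal T}=0$, so $Z$ is block-diagonal for the decomposition $H^1_0={\cal S}\oplus{\cal T}$ and hence $e^Z=e^{Z_0}\oplus I_{\cal T}=G_0\oplus I_{\cal T}=G$.

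Finally, to see $Z\in\Gamma$ I would verify $\PI{Zf}{g}=-\PI{f}{Zg}$ for $f,g\in H^1_0$. Splitting $f=f_{\cal S}+f_{\cal T}$ and $g=g_{\cal S}+g_{\cal T}$ along the $L^2$-orthogonal decomposition, and using that $Zf=Z_0f_{\cal S}\in{\cal S}$ while $Z$ annihilates the ${\cal T}$-components, both sides reduce to expressions involving only ${\cal S}$, where the skew-Hermiticity $\PI{Z_0f_{\cal S}}{g_{\cal S}}=-\PI{f_{\cal S}}{Z_0g_{\cal S}}$ closes the argument. The main conceptual point is thus the surjectivity of the exponential onto $U({\cal S})$; the only subtlety to keep track of is that ${\cal S}\oplus{\cal T}$ is orthogonal for the $L^2$ inner product but not for the $H^1_0$ one, which forces the construction of $Z$ to pass through the $H^1_0$-bounded projection $E$ rather than an $H^1_0$-orthogonal projection.
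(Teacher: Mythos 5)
Your proof is correct, but it follows a genuinely different route from the paper's. The paper works on the $L^2$ side: writing $G=1+F$ with $F$ of finite rank, it notes that $\sigma_{H^1_0}(G)$ is finite and consists of eigenvalues of modulus one, invokes Theorem \ref{proper op} $iii)$ to conclude that $\sigma_{L^2}(\bar G)=\sigma_{H^1_0}(G)$ with coinciding eigenspaces, writes the unitary extension as $\bar G=e^{iY}$ for a finite rank self-adjoint $Y$ on $L^2$, and finally restricts: since the eigenvectors of $\bar G$, hence of $Y$, lie in $H^1_0$, the operator $iY|_{H^1_0}$ belongs to $\Gamma$ and exponentiates to $G$. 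You never leave $H^1_0$: you diagonalize $G_0$ as a unitary of the finite-dimensional Hilbert space $({\cal S},\PI{\,\cdot\,}{\,\cdot\,})$, transport the skew-Hermitian generator $Z_0$ to all of $H^1_0$ via the explicit projection $E=\sum_j f_j\otimes Af_j$ of Example \ref{ejemplos basicos}.3, and verify the relation $\PI{Zf}{g}=-\PI{f}{Zg}$ directly from the $L^2$-orthogonality of ${\cal S}$ and ${\cal T}$. The two constructions produce essentially the same operator (your $Z=Z_0E$ is the restriction to $H^1_0$ of $i$ times the paper's $L^2$-generator), but yours is more elementary and self-contained, resting only on the finite-dimensional spectral theorem, whereas the paper's is shorter given the two-norm machinery it cites and keeps in the foreground the extension picture $\bar G=e^{iY}$ on which the rest of the paper is built. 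One small correction to your justification: an abstract finite rank operator on a Banach space need not be bounded (consider a rank-one operator built from a discontinuous linear functional), so $E$ is not bounded \emph{because} it has finite rank; it is bounded because it is a finite sum of the rank-one operators $f_j\otimes Af_j$, each of norm $|f_j|_1|Af_j|_1$. This does not affect the validity of your argument.
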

\begin{proof}
Note that $G=1+F$, where $F$ has finite rank (inside ${\cal S}$). This implies that the spectrum of $G$ is finite and  consists of eigenvalues of modulus one and finite multiplicity. According  to Theorem \ref{proper op} $iii)$, we know that $\sigma_{L^2}(G)=\sigma_{H_0^1}(G)$ and the multiplicity of each
non zero eigenvalue coincide .  Therefore there exists  a self-adjoint operator $Z$  of finite rank in $L^2$ such that $\bar{G}=e^{iZ}$. Note that the eigenvectors of $\bar{G}$ are eigenvectors of $G$, so that the eigenvectors of $Z$ lie in $H^1_0$ and they are finite. Then $Z(H^1_0)\subset H^1_0$, and thus
$
X=iZ|_{H^1_0}\in \Gamma \hbox{ with } G=e^X.
$
\end{proof}

We point out a simple necessary condition on the spectrum of an operator in $\gg$ that belongs  to the image of the exponential map.

\begin{rem}
If $G=e^X$, with $X\in \Gamma$, we claim that $\sigma_{L^2}(\bar{G})\subset \sigma_{H^1_0}(G)$. Indeed,
let $Z \in \mathcal{B}(L^2)$ such that $Z^*=-Z$ and $Z|_{H^1_0}=X$. Recall that from  \cite[Theorem 2]{k} we have that $\sigma_{L^2}(Z)\subseteq\sigma_{H^1_0}(X)$. By the former set inclusion and a repeated application of the analytic spectral mapping theorem we find that
\[ \sigma_{L^2}(U_G)= \{  \,   e^{\lambda} \, : \, \lambda \in \sigma_{L^2}(Z) \} \subseteq \{  \,   e^{\lambda} \, : \, \lambda \in \sigma_{H^1_0}(X) \} = \sigma_{H^1_0}(G), \]
which proves our claim.
\end{rem}

\noi Next we study multiplication operators when the set $\Omega$ is bounded. The following lemma about functions is probably well known, but we give a proof bellow.

\begin{lem}\label{lips}
Let $\Omega$ be a bounded, connected and  open subset of $\mathbb{R}^n$. If $\theta \in H^{1,\infty}(\Omega)$, then $\theta$ is Lipschitz on $\Omega$.
\end{lem}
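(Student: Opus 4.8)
The plan is to reduce to the classical fact that a smooth function with bounded gradient is Lipschitz along segments, and then to globalize using the connectedness of $\Omega$. Write $L=\mathrm{ess\,sup}_{x\in\Omega}|\nabla\theta(x)|$, which is finite since each $\partial_j\theta\in L^\infty(\Omega)$. First I would regularize: let $\rho_\varepsilon$ be a standard mollifier and set $\theta_\varepsilon=\theta*\rho_\varepsilon$ on the open set $\Omega_\varepsilon=\{x\in\Omega:\mathrm{dist}(x,\partial\Omega)>\varepsilon\}$. Then $\theta_\varepsilon$ is smooth on $\Omega_\varepsilon$, and since $\nabla\theta_\varepsilon=(\nabla\theta)*\rho_\varepsilon$ and convolution with a probability density does not increase the sup-norm (Young's inequality), one gets $|\nabla\theta_\varepsilon|\le L$ pointwise on $\Omega_\varepsilon$.

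The core estimate is local. If $x,y\in\Omega_\varepsilon$ and the segment $[x,y]$ lies in $\Omega_\varepsilon$, the fundamental theorem of calculus applied to $t\mapsto\theta_\varepsilon(x+t(y-x))$ yields $|\theta_\varepsilon(x)-\theta_\varepsilon(y)|\le L\,|x-y|$. Since $\theta_\varepsilon\to\theta$ in $L^1_{\mathrm{loc}}(\Omega)$ and at almost every point as $\varepsilon\to 0$, this inequality passes to the limit and identifies a continuous representative of $\theta$ satisfying $|\theta(x)-\theta(y)|\le L\,|x-y|$ whenever $[x,y]\subset\Omega$. In particular $\theta$ coincides a.e. with a function that is Lipschitz on every ball contained in $\Omega$, hence locally Lipschitz.

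To globalize I would use that a connected open subset of $\mathbb{R}^n$ is polygonally connected: any $x,y\in\Omega$ are joined by a polygonal path $\gamma\subset\Omega$, and summing the segment estimate over the edges of $\gamma$ gives $|\theta(x)-\theta(y)|\le L\,\ell(\gamma)$, where $\ell(\gamma)$ is the Euclidean length. Taking the infimum over such paths produces the Lipschitz bound with respect to the intrinsic (geodesic) distance $d_\Omega$ of the domain. \textbf{The main obstacle} is precisely this last step, namely upgrading the geodesic estimate to the asserted Euclidean one: for a general bounded connected open set the geodesic distance need not be comparable to $|x-y|$ (two points on opposite sides of a thin slit are Euclidean-close but geodesic-far), so the global Lipschitz property is exactly where the geometry of $\Omega$ must enter. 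I would therefore either invoke quasiconvexity of $\Omega$, so that $d_\Omega(x,y)\le C\,|x-y|$ and the final constant is $CL$, or, when only the stated hypotheses are available, read the conclusion as Lipschitz continuity with respect to $d_\Omega$; boundedness of $\Omega$ then at least guarantees that $d_\Omega$ is finite and that $\theta$ extends continuously to $\overline{\Omega}$.
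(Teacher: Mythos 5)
Your proposal takes essentially the same route as the paper --- a local Lipschitz estimate coming from the gradient bound (you obtain it by mollification, while the paper cites \cite[p. 131]{EvG} and integrates the partial derivatives along axis-parallel segments), followed by globalization along polygonal paths inside $\Omega$ --- and up to the geodesic estimate your argument is correct and in fact more self-contained. The obstacle you flag at the end is genuine, and it is exactly the point where the paper's own proof is flawed: after telescoping over the vertices of the polygonal, the paper bounds the sum of horizontal increments by $\| \frac{\partial\theta}{\partial x}\|_{\Omega,\infty}\,|\bar x - x|$, which tacitly assumes $\sum_j |x_{j+1}-x_j| = |\bar x - x|$, i.e. that the polygonal can be chosen monotone in each coordinate. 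In a general bounded connected open set no such path exists (think of a U-shaped domain, where any path must backtrack), and what the argument really yields is precisely your conclusion: a bound by the length of the path, hence Lipschitz continuity for the intrinsic metric $d_\Omega$ only. The spurious factor $\sqrt{2}\,\mathrm{diam}(\Omega)$ in the paper's final display is a symptom of this difficulty, not a cure for it.

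Moreover, no proof from the stated hypotheses can close the gap, because the lemma as stated is false. Take $\Omega \subset \mathbb{R}^2$ to be the open unit disk with the radius $[0,1)\times\{0\}$ removed, and in polar coordinates set $\theta(r,\phi)=r\,h(\phi)$, where $h$ is smooth on $[0,2\pi]$, $h\equiv 0$ near $\phi=0$ and $h\equiv 1$ near $\phi=2\pi$. Then $\theta$ is smooth and bounded on $\Omega$ with bounded gradient $h\,\hat r + h'(\phi)\,\hat\phi$, so $\theta\in H^{1,\infty}(\Omega)$; yet the points $(\tfrac12\cos\varepsilon, \pm\tfrac12\sin\varepsilon)$ lie at Euclidean distance of order $\varepsilon$ while the corresponding values of $\theta$ differ by roughly $\tfrac12$. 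Thus $\theta$ is not Lipschitz, indeed not even uniformly continuous, on $\Omega$. The correct fix is the extra hypothesis you name: convexity, or quasiconvexity ($d_\Omega(x,y)\le C|x-y|$), which for instance a Lipschitz boundary guarantees. One caveat about your fallback interpretation: Lipschitz continuity for $d_\Omega$ plus boundedness of $\Omega$ does \emph{not} give a continuous extension to $\overline{\Omega}$ --- the example above admits no continuous extension across the slit. This matters because the lemma is invoked in Proposition \ref{exp multiplication} precisely to extend $\theta$ continuously to $\overline{\Omega}$, and the hypotheses there do not exclude the example (the closure of the slit disk is the closed disk, which is simply connected), so the additional geometric assumption on $\Omega$ is genuinely needed there as well.
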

\begin{proof}
We may suppose that $n=2$. Take two arbitrary points  $(x,y), (\bar{x},\bar{y}) \in \Omega$. There exists a continuous curve $\gamma:[0,1] \to \Omega$ such that
$\gamma(0)=(x,y)$ and $\gamma(1)=(\bar{x},\bar{y})$. Then it possible to approximate $\gamma$ by a polygonal with segments parallel to coordinate axis. Moreover, each segment can be chosen inside of $\Omega$.  Let $(x,y)=(x_1,y_1), (x_2,y_1), (x_2,y_2), \ldots ,(x_m,y_m)=(\bar{x},\bar{y})$ denote the vertices of the polygonal. Now recall that the function $\theta$ is locally Lipschitz because  $\theta \in H^{1, \infty}(\Omega)$ (see \cite[p. 131]{EvG}). Therefore $\theta$ has partial derivatives almost everywhere, and it also holds that locally the function can be written as the integral of these partial derivatives. Then note that
\[  | \theta(x_{j+1} , y_j) - \theta(x_j , y_j) | \leq \int_{x_j} ^{x_{j+1}} \big| \frac{\partial \theta}{\partial x}(x,y_j) \, dx   \big| \leq \big\| \frac{\partial \theta}{\partial x} \big\|_{\Omega, \infty} \, (x_{j+1} - x_j). \]
A similar estimate holds for the other partial derivative. Since there are always a finite number of steps from $(x,y)$ to $(\bar{x},\bar{y})$, we get that
\begin{align*}
| \theta(x,y) - \theta(\bar{x},\bar{y}) | & \leq \sum_{j=1}^{m-1}   | \theta(x_j,y_j) - \theta(x_{j+1}, y_j) | + \sum_{j=2}^{m}   | \theta(x_j,y_{j-1}) - \theta(x_{j}, y_j) |  \\
& \leq  \big\| \frac{\partial \theta}{\partial x} \big\|_{\Omega, \infty}  |\bar{x} -x | + \big\| \frac{\partial \theta}{\partial y} \big\|_{\Omega, \infty} |\bar{y}- y|  \\
& \leq \sqrt{2} \, diam(\Omega) \, \max \{ \, \big\| \frac{\partial \theta}{\partial x} \big\|_{\Omega, \infty} \, , \,\big\| \frac{\partial \theta}{\partial y} \big\|_{\Omega, \infty} \, \} \, \| (x,y) - (\bar{x},\bar{y})\|.
\end{align*}\end{proof}

\begin{prop}\label{exp multiplication}
If $\Omega$ is bounded and connected, its closure $\overline{\Omega}$ is simply connected and $\theta\in H^{1,\infty}(\Omega)$ such that $|\theta(x)|=1$, then there exists a real function $\alpha\in H^{1,\infty}(\Omega)$ such that $e^{i\alpha}=\theta$, i.e.
$$
M_\theta=e^{iM\alpha},
$$
with $M_{i\alpha}\in\Gamma$.
\end{prop}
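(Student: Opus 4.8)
The plan is to produce a real-valued ``angle function'' $\alpha$ with $\theta=e^{i\alpha}$ and then verify both that $\alpha$ inherits the $H^{1,\infty}$ regularity of $\theta$ and that the operator identity $M_\theta=e^{iM_\alpha}$ holds. Existence of $\alpha$ is a lifting problem for the covering map $p:\mathbb{R}\to\mathbb{T}$, $p(t)=e^{it}$: since $|\theta|=1$, the function $\theta$ takes values in $\mathbb{T}$, and a continuous lift of $\theta$ along $p$ is exactly a continuous real $\alpha$ with $e^{i\alpha}=\theta$.

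First I would invoke Lemma \ref{lips}: $\theta$ is Lipschitz on $\Omega$, hence uniformly continuous, and therefore extends to a continuous unimodular function on the compact set $\overline{\Omega}$. This extension is what makes the hypothesis on $\overline{\Omega}$ (rather than on $\Omega$) the natural one, since $\Omega$ itself need not be simply connected even when $\overline{\Omega}$ is. I would then use the simple connectivity of $\overline{\Omega}$ to lift $\theta$ to a continuous $\alpha:\overline{\Omega}\to\mathbb{R}$. Concretely, fixing $x_0\in\Omega$ and $\alpha_0$ with $e^{i\alpha_0}=\theta(x_0)$, I would set
$$ \alpha(x)=\alpha_0 - i\int_\gamma \frac{d\theta}{\theta}, $$
where $\gamma$ is any path in $\Omega$ from $x_0$ to $x$; path-independence follows because any loop in $\Omega$ is contractible in $\overline{\Omega}$, on which $\theta$ is continuous and non-vanishing, so the winding number of $\theta\circ\gamma$ about $0$ vanishes and the loop integral is zero.

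Next I would check that $\alpha|_\Omega\in H^{1,\infty}(\Omega)$. Boundedness is immediate since $\alpha$ is continuous on the compact set $\overline{\Omega}$. For the derivatives, differentiating $\theta=e^{i\alpha}$ gives, in the distributional sense, $\partial_{x_j}\theta=i\theta\,\partial_{x_j}\alpha$, whence
$$ \partial_{x_j}\alpha=-i\,\bar{\theta}\,\partial_{x_j}\theta, $$
using $\theta^{-1}=\bar{\theta}$. Since $\theta,\partial_{x_j}\theta\in L^\infty(\Omega)$, each $\partial_{x_j}\alpha\in L^\infty(\Omega)$, so $\alpha\in H^{1,\infty}(\Omega)$, with $\|\partial_{x_j}\alpha\|_\infty\le\|\partial_{x_j}\theta\|_\infty$. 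The point needing care is justifying the chain rule for the merely Lipschitz $\theta$; I would do this locally, where a smooth branch of $-i\log$ is available on $\mathbb{T}$ and composition with the Lipschitz map $\theta$ is legitimate, then patch the local identities, which agree almost everywhere.

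Finally, with $\alpha\in H^{1,\infty}$ real, the operator $M_{i\alpha}=iM_\alpha$ is bounded on $H^1_0$ and lies in $\Gamma$, since for $f,g\in H^1_0$ one has $\PI{i\alpha f}{g}=-\PI{f}{i\alpha g}$ because $\overline{i\alpha}=-i\alpha$. The exponential identity is then pointwise: $e^{iM_\alpha}f=\sum_{n\ge 0}\frac{(i\alpha)^n}{n!}f=e^{i\alpha}f=\theta f=M_\theta f$. I expect the main obstacle to be not this algebra but the two regularity issues above: the continuous extension of $\theta$ to $\overline{\Omega}$ combined with a topological lift over a set that is only assumed simply connected, and the rigorous distributional chain rule transferring the $H^{1,\infty}$ bounds from $\theta$ to $\alpha$.
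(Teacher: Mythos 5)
Your proof is correct and shares the same skeleton as the paper's (Lemma \ref{lips} to get a continuous extension of $\theta$ to the compact set $\overline{\Omega}$, then a continuous real logarithm via simple connectivity, then the regularity transfer), but the key existence step is carried out by a genuinely different method. The paper obtains the continuous logarithm from the Arens--Royden theorem applied to the commutative Banach algebra $C(\overline{\Omega})$: since the maximal ideal space $\overline{\Omega}$ is simply connected, every invertible element of $C(\overline{\Omega})$ is an exponential, and unimodularity forces $\theta=e^{i\alpha}$ with $\alpha$ real, continuous, hence bounded. You instead use elementary covering-space theory: lifting $\theta$ through $t\mapsto e^{it}$, with a concrete path-integral formula on $\Omega$ whose path-independence follows from the fact that loops in $\Omega$ are contractible in $\overline{\Omega}$, where $\theta$ is still defined and unimodular, so the winding number of $\theta\circ\gamma$ vanishes. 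Your route is more elementary and self-contained, and it makes transparent why the hypothesis is simple connectivity of $\overline{\Omega}$ rather than of $\Omega$; the paper's route is a one-line citation that also places the obstruction in a general framework (the invertibles of $C(\overline{\Omega})$ modulo exponentials form the first \v{C}ech cohomology group of $\overline{\Omega}$). One point of fine print: the abstract lifting criterion you invoke for a map defined on all of $\overline{\Omega}$ requires local path-connectedness, which closures of open sets need not enjoy; your concrete construction on $\Omega$ sidesteps this, and in any case the paper's appeal to Arens--Royden relies on the same kind of tameness of $\overline{\Omega}$ (simple connectivity forces vanishing \v{C}ech $H^1$ only for reasonably nice compacta), so your argument is no worse off than the original. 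The regularity half of your proof --- local smooth branch of the logarithm composed with the Lipschitz function $\theta$, yielding $\partial_{x_j}\alpha=-i\,\bar{\theta}\,\partial_{x_j}\theta\in L^{\infty}(\Omega)$ --- is essentially identical to the paper's argument, and your explicit verifications that $M_{i\alpha}\in\Gamma$ and that $e^{iM_{\alpha}}f=\theta f$ for all $f\in H^1_0$ supply details the paper leaves implicit.
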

\begin{proof}
Consider the commutative Banach algebra $\a=C(\overline{\Omega},\mathbb{C})$ of  complex continuous maps in $\overline{\Omega}$, with the norm
$$
|f|_\infty =\sup_{x\in\overline{\Omega}}|f(x)|.
$$
Let $G_\a$ be the invertible group of $\a$. The maximal ideal spectrum of $\a$ is $\overline{\Omega}$, which by hypothesis is simply connected. Therefore, by the Arens-Royden Theorem \cite{royden},
\begin{equation}\label{exp a - r}
G_\a=\{e^g: g\in\a\}
\end{equation}
According to Lemma \ref{lips} the function $\theta$ is Lipschitz. Thus, $\theta$ can be extended to a continuous function in $\overline{\Omega}$.  By equation (\ref{exp a - r}) it follows  that $\theta=e^g$, for some continuous function $g$ on $\overline{\Omega}$. Since $|\theta(x)|=1$, it follows that $g=i\alpha$, with $\alpha$ real. In particular, note that $\alpha$ is bounded on $\Omega$.

 Moreover, we claim that $\alpha\in H^{1,\infty}(\Omega)$, and then it clearly follows  that $M_{i\alpha}$ belongs to $\Gamma$. To prove our claim, recall that $\theta$ is continuous, so that $|\theta(x) - \theta(y)| < 2$ if $\| x- y\|< \delta$, for some $\delta>0$. Therefore there is an analytic branch of the logarithm for all the points close enough to a fixed $x \in \Omega$. Then, $e^{i\alpha(y)}=e^{i \log(\theta(y))}$, and $\alpha(y)=\log(\theta(y)) + 2k \pi$ by  connectedness. Thus, $\alpha$ has partial derivatives almost everywhere, and
 \[  \big| \frac{\partial\alpha}{\partial x_j}(y) \big| = \big|  \frac{1}{\theta(y)} \, \frac{\partial\alpha}{\partial x_j}(y) \big|\leq \|\theta \|_{1,\infty} \, .  \]
 Since the¡is bound is the same for any point, we conclude that $\alpha \in H^{1,\infty}(\Omega)$.
\end{proof}

\medskip

\noi The same idea provides an example of an element in $\gg$ which is not in the range of the exponential, but it should be noted that in this example we consider $\Omega$ a compact manifold (rather than an open subset), namely $\Omega=\mathbb{T}$. See \cite[p. 232]{treves} for the definition of $H^1$ in this context.

\begin{ejem}\label{example exp}
Consider $\Omega=\mathbb{T}$, and the function $z$. We claim that $M_z\in\gg$ does not belong to the range of the exponential map. Suppose that $M_z=e^X$ for some $X\in\Gamma$. Then $X=M_g$ for some $g\in H^1$. Indeed, put $g=X1\in H^1$. Since $X$ commutes with $e^X=M_z$,
$$
Xz^n=X(M_z)^n1=(M_z)^nX1=z^ng=M_gz^n,
$$
for any integer $n$. It follows that $X=M_g$. Therefore $z=e^X1=e^g$, with $g$ continuous in $\mathbb{T}$, which is a contradiction.
\end{ejem}

\section{Stone's theorem in  $\gg$}\label{one parameter}
Clearly, the positive operator $A \in \b(H^1_0)$ such that    $[Af,g]=\PI{f}{g}$ is symmetrizable. According to Theorem \ref{symmetrizable results}, it extends to a bounded operator on $L^2$. Note that $A$   has dense range, both regarded as an operator on $L^2$ or $H^1_0$. The next elementary remark shows that $A(L^2)\subset H^1_0$. More precisely:

\begin{rem}\label{iso}
If $A$ is regarded as an operator in $\b(L^2)$, then $A^{1/2}(L^2)=H^1_0$. To this end, let $f\in L^2$ and $(g_n)_n$ be a sequence in $H^1_0$ such that $|g_n-f|_2\to 0$. Then $A^{1/2}g_n\to A^{1/2}f$ in $L^2$. Note that $A^{1/2}g_n$ is a Cauchy sequence in $H^1_0$. Indeed, we have
$$
|A^{1/2}(g_n-g_m)|^2_1=[g_n-g_m,A(g_n-g_m)]=\PI{g_n-g_m}{g_n-g_m}=|g_n-g_m|_2^2.
$$
It follows that $A^{1/2}f \in H^1_0$, and thus $A^{1/2}(L^2)\subset H^1_0$. On the other hand, if $g\in H^1_0$, since $A^{1/2}$ has dense range, there exists a sequence $(f_n)_n$ in $H_0^1$ such that $|A^{1/2}f_n-g|_1\to 0$. The same computation above shows that $(f_n)_n$ is a Cauchy sequence in $L^2$:
$$
|f_n-f_m|_2 ^2=|A^{1/2}(f_n-f_m)|^2_1.
$$
Therefore there exists $f\in L^2$ such that $|f_n - f|_2\to 0$. Then $A^{1/2}f=g$. Moreover, we have
$$
|A^{1/2}f|^2_1=[f,Af]=\langle f,f\rangle =|f|_2^2 \, .
$$
Hence $A^{1/2}:L^2\to H^1_0$ is a surjetive isometry.
\end{rem}

\begin{nota}
The surjective isometry   $A^{1/2}:(L^2, | \, \cdot \, |_2)\to (H^1_0, | \, \cdot \,  |)$ will be denoted by $\mathcal{A}^{1/2}$  to distinguish it from the operator $A^{1/2}$ acting on  $L^2$ or  $H^1_0$.
\end{nota}

There is yet another characterization of $\gg$:
\begin{prop}\label{usubge}
Let $G$ be an invertible operator on $H^1_0$. There is a unique unitary operator $U_G\in U(H^1_0)$ such that $U_GA^{1/2}=A^{1/2}G$ if and only if $G\in \gg$. The map $G\mapsto U_G$ is a group isomorphism from $\gg$ onto the group
$$
{\cal U}_{R(A^{1/2})}(H^1_0)=\{U\in U(H^1_0): U(R(A^{1/2}))=R(A^{1/2})\}.
$$
Moreover, if $Ad_{\mathcal{A}^{1/2}}:\b(L^2)\to\b(H^1_0)$ denotes the $C^*$-algebra isomorphism implemented by the unitary transformation $\mathcal{A}^{1/2}$, $Ad_{\mathcal{A}^{1/2}}(X)=\mathcal{A}^{1/2} X\mathcal{A}^{-1/2}$, then
$$
Ad_{\mathcal{A}^{1/2}}(\{U\in U(L^2): U(H^1_0)=H^1_0\})={\cal U}_{R(A^{1/2})}(H^1_0).
$$
\end{prop}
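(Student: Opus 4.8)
The plan is to exploit the surjective isometry $\mathcal{A}^{1/2}\colon(L^2,|\cdot|_2)\to(H^1_0,|\cdot|_1)$ of Remark \ref{iso}, while keeping careful track of three distinct incarnations of the square root: the operator $A^{1/2}$ acting on $L^2$, the operator $A^{1/2}$ acting on $H^1_0$, and the isometry $\mathcal{A}^{1/2}$ between them. The first fact I would record is that $\mathcal{A}^{1/2}$ restricts on $H^1_0\subset L^2$ to the operator $A^{1/2}$ of $\b(H^1_0)$, since $A$ is symmetrizable and its $L^2$- and $H^1_0$-versions agree on $H^1_0$; consequently $\mathcal{A}^{1/2}(H^1_0)=R(A^{1/2})$, the range of $A^{1/2}$ viewed in $\b(H^1_0)$. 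Moreover, as $A$ (hence $A^{1/2}$) is injective with dense range on $H^1_0$, the subspace $R(A^{1/2})$ is dense.

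For the equivalence, suppose first that $G\in\gg$ and define $U_G$ on the dense subspace $R(A^{1/2})$ by $U_G(A^{1/2}f)=A^{1/2}Gf$, which is legitimate because $A^{1/2}$ is injective. Using $[A^{1/2}u,A^{1/2}v]=[Au,v]=\PI{u}{v}$ together with $G^*AG=A$ one gets
\[
[U_GA^{1/2}f,U_GA^{1/2}g]=[A^{1/2}Gf,A^{1/2}Gg]=\PI{Gf}{Gg}=\PI{f}{g}=[A^{1/2}f,A^{1/2}g],
\]
so $U_G$ is isometric on $R(A^{1/2})$ and extends to an isometry of $H^1_0$. Its range is closed and contains $A^{1/2}G(H^1_0)=A^{1/2}(H^1_0)=R(A^{1/2})$, which is dense; hence $U_G$ is unitary and the same relation gives $U_G(R(A^{1/2}))=R(A^{1/2})$, i.e. $U_G\in{\cal U}_{R(A^{1/2})}(H^1_0)$. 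Uniqueness is immediate from density of $R(A^{1/2})$. Conversely, if some unitary $U_G$ satisfies $U_GA^{1/2}=A^{1/2}G$, then reading the displayed chain backwards yields $\PI{Gf}{Gg}=\PI{f}{g}$, so $G\in\gg$.

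That $G\mapsto U_G$ is a homomorphism follows by uniqueness from $U_{G_1G_2}A^{1/2}=A^{1/2}G_1G_2=U_{G_1}U_{G_2}A^{1/2}$, and it is injective since $U_G=I$ forces $A^{1/2}G=A^{1/2}$, hence $G=I$. For the final assertion I would verify the conjugation directly: for $W\in U(L^2)$ with $W(H^1_0)=H^1_0$, the operator $Ad_{\mathcal{A}^{1/2}}(W)=\mathcal{A}^{1/2}W\mathcal{A}^{-1/2}$ is unitary on $H^1_0$ and carries $R(A^{1/2})=\mathcal{A}^{1/2}(H^1_0)$ onto $\mathcal{A}^{1/2}W(H^1_0)=\mathcal{A}^{1/2}(H^1_0)=R(A^{1/2})$, so it lies in ${\cal U}_{R(A^{1/2})}(H^1_0)$; conversely any $U\in{\cal U}_{R(A^{1/2})}(H^1_0)$ pulls back under $Ad_{\mathcal{A}^{-1/2}}$ to a unitary of $L^2$ preserving $H^1_0$, which proves the displayed equality of groups. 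Finally, comparing intertwining relations shows $U_G=Ad_{\mathcal{A}^{1/2}}(\bar G)$ for each $G\in\gg$, where $\bar G$ is the $L^2$-extension; since $G\mapsto\bar G$ is a group isomorphism onto $\mathcal{U}_{H_0^1}(L^2)$ by the extension map (\ref{mapa grupos}), and $Ad_{\mathcal{A}^{1/2}}$ restricts to an isomorphism of $\mathcal{U}_{H_0^1}(L^2)$ onto ${\cal U}_{R(A^{1/2})}(H^1_0)$, the composite $G\mapsto U_G$ is a group isomorphism onto ${\cal U}_{R(A^{1/2})}(H^1_0)$, which also settles surjectivity.

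The main obstacle I anticipate is bookkeeping rather than substance: one must scrupulously distinguish the operator $A^{1/2}$ on $H^1_0$ from the isometry $\mathcal{A}^{1/2}\colon L^2\to H^1_0$ and check that they coincide on $H^1_0$, so that $R(A^{1/2})=\mathcal{A}^{1/2}(H^1_0)$. Once this identification and the density of $R(A^{1/2})$ are secured, every step reduces to the isometry property of $\mathcal{A}^{1/2}$ (or of $U_G$) combined with $G^*AG=A$. A secondary point demanding care is to justify at each stage that expressions involving the unbounded inverse $A^{-1/2}$ act only on elements of $R(A^{1/2})$, so that all manipulations stay within bounded operators.
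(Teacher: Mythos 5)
Your proposal is correct, and in substance it runs parallel to the paper's proof, but it is organized along a genuinely different route in two places. First, for the existence of $U_G$ the paper uses the polar decomposition of $A^{1/2}G$: since $|A^{1/2}G|=(G^*AG)^{1/2}=A^{1/2}$, the isometric factor extends to the desired unitary; your construction, defining $U_G$ on the dense subspace $R(A^{1/2})$ by $U_GA^{1/2}f=A^{1/2}Gf$ and extending the resulting isometry, packages the same computation in a more elementary way. Second, and more significantly, the paper proves surjectivity of $G\mapsto U_G$ directly: given $U\in{\cal U}_{R(A^{1/2})}(H^1_0)$ it defines $g_f$ by $UA^{1/2}f=A^{1/2}g_f$, checks $|g_f|_2=|A^{1/2}g_f|_1=|UA^{1/2}f|_1=|A^{1/2}f|_1=|f|_2$, and concludes that $f\mapsto g_f$ is an element of $\gg$ mapped to $U$; the conjugation identity for $Ad_{\mathcal{A}^{1/2}}$ is only observed afterwards as a ``straightforward verification''. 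You invert the logic: you prove the conjugation identity first, then obtain surjectivity by writing $U_G=Ad_{\mathcal{A}^{1/2}}(\bar{G})$ and composing with the bijection (\ref{mapa grupos}). Your route is more economical, since surjectivity is inherited from a fact already established in the paper, and it makes the ``Moreover'' clause the engine of the proof rather than an afterthought; the paper's route has the virtue of being self-contained within the proposition. One caveat: the identification on which everything rests, namely $\mathcal{A}^{1/2}|_{H^1_0}=A^{1/2}$ (equivalently $\mathcal{A}^{1/2}(H^1_0)=R(A^{1/2})$), does not follow from the mere fact that the two versions of $A$ agree on $H^1_0$, as your one-line justification suggests. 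It needs an argument: for instance, take polynomials $p_n$ converging to $\sqrt{t}$ uniformly on an interval containing both spectra; then for $f\in H^1_0$ the vectors $p_n(A)f$ converge to the $H^1_0$-square root in $|\,\cdot\,|_1$ and to the $L^2$-square root in $|\,\cdot\,|_2$, and the two limits coincide because $|\,\cdot\,|_2\le C\,|\,\cdot\,|_1$. Alternatively, the $H^1_0$-square root is symmetrizable and its $L^2$-extension is a positive square root of the extension of $A$, so uniqueness of positive square roots applies. The paper uses this identification tacitly as well (already in Remark \ref{iso}), so this is a point to be made explicit rather than a gap peculiar to your argument.
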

\begin{proof}
The only if part is algebraic: if there exists such a $U_G$, then
$$
G^*AG=A^{1/2}U_G^*U_GA^{1/2}=A,
$$
thus $G\in \gg$. To prove the other implication, note that when $G\in \gg$, the operator $A^{1/2}G$ is injective and has dense range. Therefore the isometric part in its polar decomposition $U|A^{1/2}G|$ extends to a unitary operator, which we denote  $U=U_G$. Note that
$$
|A^{1/2}G|=((A^{1/2}G)^*A^{1/2}G)^{1/2}=(G^*AG)^{1/2}=A^{1/2},
$$
and thus $U_GA^{1/2}=A^{1/2}G$. The unitary $U_G$ is clearly unique with this property, and the mapping $G\mapsto U_G$ is a group homomorphism:
$$
A^{1/2}G_1G_2=U_{G_1}U_{G_2}A^{1/2}=U_{G_1G_2}A^{1/2}.
$$
Clearly $U_GA^{1/2}=A^{1/2}G$ implies that $U_G(R(A^{1/2}))\subset R(A^{1/2})$. Since the same is true for $U_{G^{-1}}=U_G^{-1}$, equality holds. Pick a unitary operator $U$ on $H^1_0$ such that $U(R(A^{1/2}))=R(A^{1/2})$. For $f\in H^1_0$, $UA^{1/2}f\in R(A^{1/2})$, since $A^{1/2}$ is injective, there exists $g_f\in H^1_0$ such that
$UA^{1/2}f=A^{1/2}g_f$. Thus a map $f\mapsto g_f$ is defined, which is clearly a linear bijection of $H^1_0$. Moreover,
since $U$ is unitary in $H^1_0$,
$$
|f|_2=|A^{1/2}f|_1=|UA^{1/2}f|_1=|A^{1/2}g_f|_1=|g_f|_2.
$$
Thus $f\mapsto g_f$ extends to a unitary operator in $L^2$, which by construction fixes $H^1_0$, thus $G$ defined $Gf=g_f$, belongs to $\gg$, and clearly $UA^{1/2}=GA^{1/2}$.

Finally, a straightforward verification shows that $\mathcal{A}^{-1/2}U_G\mathcal{A}^{1/2}$ is a unitary operator of $L^2$, which extends $G$.
\end{proof}

\begin{coro}
The map $\gg\ni G\mapsto U_G\in {\cal U}_{R(A^{1/2})}$ from Proposition \ref{usubge} is a continuous group isomorphism (in the topology induced by the norm of $\b(H^1_0)$). Its inverse is not continuous.
\end{coro}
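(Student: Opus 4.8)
The plan is to read everything off the two preceding propositions, since the group-isomorphism part of the statement is already contained in Proposition \ref{usubge}; only the two continuity assertions remain to be settled. The key observation is that $U_G$ is nothing but the image of the $L^2$-extension $\bar{G}$ under the $C^*$-algebra isomorphism $Ad_{\mathcal{A}^{1/2}}$. Indeed, the last line of the proof of Proposition \ref{usubge} shows that $\mathcal{A}^{-1/2}U_G\mathcal{A}^{1/2}$ is the unitary extension of $G$ to $L^2$, that is,
$$
U_G = Ad_{\mathcal{A}^{1/2}}(\bar{G}) = \mathcal{A}^{1/2}\,\bar{G}\,\mathcal{A}^{-1/2}.
$$

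First I would exploit that $Ad_{\mathcal{A}^{1/2}}$ is isometric for the operator norms, being a $*$-isomorphism between $\b(L^2)$ and $\b(H^1_0)$. This yields, for $G_1,G_2\in\gg$,
$$
\|U_{G_1}-U_{G_2}\| = \|Ad_{\mathcal{A}^{1/2}}(\bar{G_1}-\bar{G_2})\| = \|\bar{G_1}-\bar{G_2}\|_{\b(L^2)}.
$$
Now I would invoke Proposition \ref{desi} to bound the right-hand side by $\max\{\|G_1^{-1}\|,\|G_2^{-1}\|\}\,\|G_1-G_2\|$. Fixing $G_2$ and letting $G_1\to G_2$, the continuity of the inversion map on $\gg$ (which holds since $\gg$ is a Banach-Lie group by Proposition \ref{B-Lie}) keeps $\|G_1^{-1}\|$ bounded near $G_2$, so the right-hand side tends to $0$. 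This establishes continuity of $G\mapsto U_G$.

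For the failure of continuity of the inverse I would recycle the sequence of multiplication operators $M_{\theta_n}$ built in the proof of Proposition \ref{desi}, with $\theta_n(x)=e^{i\sin(nx_1)/n}$. There it was shown that $\|\bar{M}_{\theta_n}-I\|_{\b(L^2)}\to 0$ while $\|M_{\theta_n}-I\|\nrightarrow 0$. Since $U_I=I$ (by uniqueness in Proposition \ref{usubge}, as $U_I A^{1/2}=A^{1/2}$) and the identity $\|U_{M_{\theta_n}}-I\|=\|\bar{M}_{\theta_n}-I\|_{\b(L^2)}$ holds by the isometry of $Ad_{\mathcal{A}^{1/2}}$, we obtain $U_{M_{\theta_n}}\to I=U_I$ in $\b(H^1_0)$, whereas the preimages $M_{\theta_n}$ stay bounded away from $I$. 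Hence $U_G\mapsto G$ cannot be continuous at the identity.

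I do not expect any serious obstacle here: all of the analytic work, namely the spectral-radius estimate underlying Proposition \ref{desi} and the construction of the explicit multiplier sequence, has already been carried out. The only point requiring a moment's care is to record cleanly that $Ad_{\mathcal{A}^{1/2}}$ is norm-preserving and that $U_G$ factors as $Ad_{\mathcal{A}^{1/2}}(\bar{G})$; after that, both assertions are immediate transfers of the corresponding facts for the extension map $G\mapsto\bar{G}$ established in Proposition \ref{desi}.
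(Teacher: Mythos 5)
Your proposal is correct and takes essentially the same route as the paper: the paper's entire proof is the remark that, modulo the isometric automorphism $Ad_{\mathcal{A}^{1/2}}$, the map $G\mapsto U_G$ \emph{is} the extension map $G\mapsto\bar{G}$, so the continuity estimate and the multiplier counterexample of Proposition \ref{desi} transfer verbatim. You have simply written out the details the paper leaves implicit, namely the identity $U_G=\mathcal{A}^{1/2}\,\bar{G}\,\mathcal{A}^{-1/2}$ and the fact that conjugation by the surjective isometry $\mathcal{A}^{1/2}$ preserves operator norms.
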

\begin{proof} Modulo the automorphism $Ad_{\cal A}$, this homomorphism is the extension map $G\mapsto \bar{G}$ (see Proposition \ref{desi}).
\end{proof}

Let $G(t)$ be a  strongly continuous one parameter group in $\gg$, i.e. for $t\in\mathbb{R}$, $G(t)\in\gg$, $G(0)=1$, $G(t+s)=G(t)G(s)$, and for each $f\in H^1_0$, the map $t\mapsto G(t)f\in H^1_0$ is continuous.
By Proposition \ref{usubge}, this gives rise to a one parameter group of unitaries $U_{G(t)}$. Let us see first that $U_{G(t)}$ is also strongly continuous.
\begin{prop}
Let $G(t)$, $t\in\mathbb{R}$, be a strongly continuous one parameter group in $\gg$. Then $U_{G(t)}$ is a strongly continuous group of unitaries in $H^1_0$.
\end{prop}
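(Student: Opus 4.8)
The plan is to notice first that the group structure is automatic, so that the entire statement reduces to proving strong continuity. Since $G\mapsto U_G$ is a group homomorphism by Proposition \ref{usubge}, the hypotheses $G(0)=1$ and $G(t+s)=G(t)G(s)$ translate into $U_{G(0)}=I$ and $U_{G(t+s)}=U_{G(t)}U_{G(s)}$, so $\{U_{G(t)}\}_{t\in\mathbb{R}}$ is already a one parameter group of unitary operators on $H^1_0$. Hence everything comes down to showing that for each fixed $h\in H^1_0$ the map $t\mapsto U_{G(t)}h$ is continuous with respect to $|\,\cdot\,|_1$.

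Next I would exploit the intertwining relation $U_{G(t)}A^{1/2}=A^{1/2}G(t)$ furnished by Proposition \ref{usubge} (here $A^{1/2}$ is the bounded operator on $H^1_0$, not the isometry $\mathcal{A}^{1/2}$) on the dense subspace $R(A^{1/2})$. For a vector of the form $h=A^{1/2}k$ with $k\in H^1_0$ one gets $U_{G(t)}h=A^{1/2}G(t)k$; since $A^{1/2}\in\b(H^1_0)$ is bounded and $t\mapsto G(t)k$ is continuous by the strong continuity hypothesis, the map $t\mapsto U_{G(t)}h$ is continuous for every $h\in R(A^{1/2})$. The required density is clear: $A$ has dense range in $H^1_0$ and $R(A)\subseteq R(A^{1/2})$, so $R(A^{1/2})$ is dense in $H^1_0$.

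The passage from this dense subspace to all of $H^1_0$ is the standard $\varepsilon/3$ argument, and the key structural input is that the operators $U_{G(t)}$ are unitaries, hence uniformly bounded with $\|U_{G(t)}\|=1$. Given $h$, $t_0$ and $\varepsilon>0$, I would pick $h'\in R(A^{1/2})$ with $|h-h'|_1<\varepsilon/3$ and estimate $|U_{G(t)}h-U_{G(t_0)}h|_1$ by the three terms $|U_{G(t)}(h-h')|_1$, $|U_{G(t)}h'-U_{G(t_0)}h'|_1$ and $|U_{G(t_0)}(h'-h)|_1$; the first and third are each below $\varepsilon/3$ by unitarity, while the middle one tends to $0$ as $t\to t_0$ by the previous step.

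There is no genuine obstacle here; the only points that require care are distinguishing $A^{1/2}$ acting on $H^1_0$ from the isometry $\mathcal{A}^{1/2}\colon L^2\to H^1_0$, and verifying density of $R(A^{1/2})$, after which the uniform bound coming from unitarity does the essential work of upgrading continuity on a dense set to genuine strong continuity. An equivalent route, which I would mention as an alternative, is to use the identity $U_{G(t)}=\mathcal{A}^{1/2}\,\bar{G}(t)\,\mathcal{A}^{-1/2}$ from Proposition \ref{usubge} to reduce the claim to the strong continuity of $\bar{G}(t)$ on $L^2$; this in turn follows because $G(t)f\to G(t_0)f$ in $|\,\cdot\,|_1$ and hence in $|\,\cdot\,|_2$ for $f$ in the dense subspace $H^1_0$, again closed off by the same $\varepsilon/3$ and uniform boundedness of the unitaries $\bar{G}(t)$.
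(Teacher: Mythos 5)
Your proof is correct, and it diverges from the paper at exactly one point: the passage from continuity on the dense subspace to strong continuity everywhere. Both you and the paper begin identically — group structure from the homomorphism $G\mapsto U_G$ of Proposition \ref{usubge}, then continuity of $t\mapsto U_{G(t)}h$ for $h\in R(A^{1/2})$ via the intertwining relation $U_{G(t)}A^{1/2}=A^{1/2}G(t)$ and boundedness of $A^{1/2}$ on $H^1_0$. But the paper then closes the argument with heavier machinery: it shows that for arbitrary $f\in H^1_0$ the function $t\mapsto[U_{G(t)}f,g]$ is a pointwise limit of continuous functions, hence measurable, and invokes von Neumann's extension of Stone's theorem (weakly measurable one parameter groups on a separable Hilbert space are strongly continuous). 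Your $\varepsilon/3$ argument, resting only on the uniform bound $\|U_{G(t)}\|=1$, is more elementary and buys something real: it does not require separability of $H^1_0$, nor any appeal to measurability, and it makes transparent that unitarity is the only structural input needed. The paper's route, by contrast, generalizes to situations where one has mere weak measurability rather than continuity on a dense set. Your alternative via $U_{G(t)}=\mathcal{A}^{1/2}\,\bar{G}(t)\,\mathcal{A}^{-1/2}$ is also sound — the inequality $|\,\cdot\,|_2\le|\,\cdot\,|_1$ gives $L^2$-continuity of $t\mapsto\bar{G}(t)f$ for $f$ in the dense subspace $H^1_0$, and the same density argument plus the fact that $\mathcal{A}^{\pm 1/2}$ are isometries finishes it — though it is essentially the same argument transported to $L^2$ rather than a distinct idea.
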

\begin{proof}
Since the map $G\mapsto U_G$ is a group homomorphism, it is clear that $U_{G(t)}$ is a one parameter group of unitaries. The fact that $U_{G(t)}A^{1/2}=A^{1/2}G(t)$, implies that $t\mapsto U_{G(t)}f$ is continuous for any $f\in R(A^{1/2})$, which is dense in $H^1_0$. By von Neumann's extension of Stone's theorem  (see for instance  \cite[Theorem VIII.9]{reedsimon}), which states that a one parameter group in a separable Hilbert space, which is weakly measurable, is strongly continuous, our result follows. Indeed, if $f\in H^1_0$, let $(f_n)$ be a sequence in $R(A^{1/2})$ such that $|f_n-f|_1\to 0$. Then, for each $t\in\mathbb{R}$ and $g\in H^1_0$,
$$
\phi_n(t)=[U_{G(t)}f_n,g]\to \phi(t)=[U_{G(t)}f,g].
$$
Since the $\phi_n$ are continuous, it follows that $\phi$ is measurable.
\end{proof}
According to Stone's theorem, there exists a (possibly unbounded) self-adjoint operator $S:D(S)\subset H^1_0\to H^1_0$ such that
$$
U_{G(t)}=e^{itS}.
$$
Let us now relate $S$ to the Lie algebra $\Gamma$.
\begin{rem}
If $G(t)$ is strongly continuous differentiable, i.e. if $t\mapsto G(t)f$ is continuously differentiable for every $f \in H^1_0$, then the identity $U_{G(t)}A^{1/2}=A^{1/2}G(t)$ implies that the function $t\mapsto U_{G(t)}f$ is  differentiable for any $f\in R(A^{1/2})$. Thus $R(A^{1/2})\subset D(S)$, and moreover, $iSA^{1/2}f=A^{1/2}\dot{G}(0)f$.

On the other hand, $\dot{G}(0)$ is an everywhere defined operator in $H^1_0$, which  has an adjoint. Indeed, $G^*(t)$ is weakly differentiable, because $t\mapsto [G^*(t)f,g]=[f,G(t)g]$ is differentiable, and therefore its weak derivative $\dot{G}^*(0)$ is an adjoint for $\dot{G}(0)$. It follows that $\dot{G}(0)$ is bounded. Thus, differentiating the identity
$$
G^*(t)AG(t)f=Af
$$
at $t=0$ for any $f\in H^1_0$, yields
$$
\dot{G}(0)^*Af+A\dot{G}(0)f=0.
$$
Therefore $\dot{G}(0)\in \Gamma$. Moreover,  $X=i\dot{G}(0)$ satisfies $X^*A=AX$. By  Theorem \ref{Quasiherm}  there exists a self-adjoint bounded operator $S_0$ in $H^1_0$ such that $S_0A^{1/2}=A^{1/2}X$. Therefore, $S_0=S$, that is, $S$ is bounded, and satisfies $S(R(A^{1/2}))\subset R(A^{1/2})$.
\end{rem}
In the general case ($G(t)$ strongly continuous), we have the following result. Let $C_0^\infty(\mathbb{R})$ denote the space of smooth functions with compact support on $\mathbb{R}$. Let  $D \subset H_0^1$ be the  linear span of the vectors
$$
f_\varphi=\int_{-\infty}^\infty \varphi(t) G(t)f d t,
$$
where $f\in H^1_0$ and $\varphi\in C_0^\infty(\mathbb{R})$.    This space $D$ was used in the proof of Stone's theorem   due to G\r{a}rding and Wightmann (see \cite{reedsimon}). The fact that $D$  is dense in $H_0^1$ is a general property of the space $D$ for any underlying Hilbert space.
\begin{prop}
With the above notations, the following assertions hold:
\begin{enumerate}
\item[i)]
The subspace $D$ is dense in $H^1_0$, satisfies that $G(t)(D)\subset D$ for all $t\in\mathbb{R}$, and for $f\in D$, $t\mapsto G(t)f$ is differentiable.
\item[ii)]
The subspace $A^{1/2}(D)$ is dense in $H^1_0$,  satisfies that $U_G(t)(A^{1/2}(D))\subset A^{1/2}(D)$, and for $f\in A^{1/2}(D)$, the map $t\mapsto U_{G(t)}f$ is differentiable.
\item[iii)]
$S(A^{1/2}(D))\subset A^{1/2}(D)$, $A^{1/2}(D)$ is a core for $S$, and if $f\in D$,
$$
iSA^{1/2}f=A^{1/2}\dot{G}(0)f.
$$
\end{enumerate}
\end{prop}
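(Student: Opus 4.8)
The plan is to regard $D$ as the G\r{a}rding domain of the strongly continuous group $G(t)$ acting on $H^1_0$, to establish its standard properties directly, and then to transport everything to the unitary group $U_{G(t)}=e^{itS}$ through the intertwining relation $U_{G(t)}A^{1/2}=A^{1/2}G(t)$ together with the boundedness and injectivity of $A^{1/2}\in\b(H^1_0)$. Throughout, the integrals defining $f_\varphi$ are Bochner integrals in $H^1_0$, which are legitimate because $t\mapsto G(t)f$ is continuous and $\varphi$ has compact support, so the integrand is continuous with compact support.

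For $i)$, I would first prove invariance by the change of variables $u=s+t$:
\[ G(s)f_\varphi=\int_{-\infty}^{\infty}\varphi(t)G(s+t)f\,dt=\int_{-\infty}^{\infty}\varphi(u-s)G(u)f\,du=f_{\varphi(\cdot-s)}, \]
so that $G(s)f_\varphi\in D$ since $\varphi(\cdot-s)\in C_0^\infty(\mathbb{R})$. Differentiating this identity under the integral sign (justified because $\varphi,\varphi'$ have compact support and $G(u)f$ is locally bounded) shows that $s\mapsto G(s)f_\varphi$ is differentiable, with $\dot{G}(0)f_\varphi=-f_{\varphi'}$; in particular $\dot{G}(0)(D)\subset D$, a fact I will reuse in $iii)$. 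Density of $D$ follows from the general property quoted before the statement, or directly: taking $\varphi=\varphi_\varepsilon$ a nonnegative approximate identity at $0$, strong continuity gives $|f_{\varphi_\varepsilon}-f|_1\le\int\varphi_\varepsilon(t)|G(t)f-f|_1\,dt\to 0$.

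For $ii)$, the key observation is that pulling the bounded operator $A^{1/2}$ inside the integral and using the intertwiner yields, for $h=A^{1/2}f$,
\[ A^{1/2}f_\varphi=\int_{-\infty}^{\infty}\varphi(t)A^{1/2}G(t)f\,dt=\int_{-\infty}^{\infty}\varphi(t)U_{G(t)}h\,dt, \]
so that $A^{1/2}(D)$ is exactly the G\r{a}rding-type span of $U_{G(t)}$ built over the dense subspace $R(A^{1/2})$. Density then follows as above, applying an approximate identity to vectors $h\in R(A^{1/2})$, whose closure is all of $H^1_0$. Invariance is immediate from $i)$, since $U_{G(t)}A^{1/2}(D)=A^{1/2}G(t)(D)\subset A^{1/2}(D)$. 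Finally, for $f=A^{1/2}d$ with $d\in D$, one has $U_{G(t)}f=A^{1/2}G(t)d$, which is differentiable because $t\mapsto G(t)d$ is and $A^{1/2}$ is bounded.

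For $iii)$, I would differentiate the identity $U_{G(t)}A^{1/2}f=A^{1/2}G(t)f$ at $t=0$ for $f\in D$. The left-hand side is differentiable by $ii)$, which forces $A^{1/2}f\in D(S)$ with derivative $iSA^{1/2}f$, while the right-hand side has derivative $A^{1/2}\dot{G}(0)f$; this gives the desired formula $iSA^{1/2}f=A^{1/2}\dot{G}(0)f$. Since $\dot{G}(0)f\in D$ by $i)$, the formula also shows $SA^{1/2}f=-iA^{1/2}\dot{G}(0)f\in A^{1/2}(D)$, i.e. $S(A^{1/2}(D))\subset A^{1/2}(D)$. The core property then follows from the standard result that a subspace which is dense, contained in $D(S)$, and invariant under the unitary group $U_{G(t)}=e^{itS}$ is automatically a core for $S$ (see e.g. \cite{reedsimon}); all three hypotheses have just been verified in $ii)$ and $iii)$. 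The main obstacle is the bookkeeping with Bochner integrals and differentiation under the integral sign, together with the transport through $A^{1/2}$; once the intertwining identity is used to identify $A^{1/2}(D)$ with a G\r{a}rding domain of $U_{G(t)}$, the density, invariance and core statements reduce to the classical theory.
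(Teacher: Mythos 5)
Your proposal is correct and follows essentially the same route as the paper: invariance of $D$ via the change of variables $G(t)f_\varphi=f_{\varphi(\cdot-t)}$, transport of density, invariance and differentiability to $A^{1/2}(D)$ through the intertwining relation $U_{G(t)}A^{1/2}=A^{1/2}G(t)$, the Reed--Simon core criterion for subspaces invariant under $e^{itS}$, and differentiation of the intertwining identity at $t=0$. In fact you are slightly more careful than the paper, which leaves implicit both the identity $\dot{G}(0)f_\varphi=-f_{\varphi'}$ and the resulting inclusion $S(A^{1/2}(D))\subset A^{1/2}(D)$ that you spell out.
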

\begin{proof}
Pick $\varphi\in C_0^\infty(\mathbb{R})$ and $f\in H^1_0$, then,
$$
G(t)f_\varphi=\int_{-\infty}^\infty \varphi(s)G(s+t)f d s=\int_{-\infty}^\infty \varphi(s-t)G(s)f d s=f_{\varphi(\,\cdot \,-t)}\in D,
$$ and clearly $t \mapsto G(t)f_\varphi$ is differentiable. Since $D\subset H^1_0$ is dense, and $A$ has dense range, then $A^{1/2}(D)$ is dense in $H^1_0$. Moreover,
$$
U_{G(t)}A^{1/2}f_\varphi=A^{1/2}G(t)f_\varphi=A^{1/2}f_{\varphi(\ -t)}\in A^{1/2}(D),
$$
and it is also clearly differentiable as a function in $t$. Therefore
$$
e^{itS}(A^{1/2}(D))=U_{G(t)}(A^{1/2}(D))\subset A^{1/2}(D)
$$
 for all $t\in \mathbb{R}$. This implies that $A^{1/2}(D)$ is a core for $S$ (see \cite[Theorem VIII.11]{reedsimon}). Finally, differentiating $U_{G(t)}A^{1/2}f_\varphi=A^{1/2}G(t)f_\varphi$ at $t=0$, one obtains
 $iSA^{1/2}f_\varphi=A^{1/2}\dot{G}(0)f_\varphi$.
\end{proof}

\section{Invariant Finsler metrics in $\gg$}\label{Finsler metrics}

The group $\gg$ preserves the norms in $L^2$, the usual spectral norm and the Schatten $p$-norms. Therefore it is natural, from a geometric standpoint, to consider these norms to endow $\gg$ with a Finsler metric.
The tangent space $(T\gg)_G$ of $\gg$ at $G$ identifies with
$$
(T\gg)_G=g\Gamma=\{GX: X\in \Gamma\}.
$$
Since the elements $G\in\gg$ preserve the $2$-norm $\|\ \|_2$, it is natural to consider, in each tangent space, the norm
$$
\|V\|_G=\|V\|_{\b(L^2)}.
$$
Note that if $V=GX$, for $X\in\Gamma$, then
$$
\|V\|_G=\|GX\|_{\b(L^2)}=\|\bar{G}X\|_{\b(L^2)}=\|X\|_{\b(L^2)},
$$
because $\bar{G}$ is a unitary operator in $L^2$. This implies that this metric is bi-invariant for the left and right action of $\gg$ on itself. Note that the tangent spaces are not complete with this norm.

We measure the length of a differentiable  curve $\gamma$ in $\gg$, parametrized in the interval $I$, as is usual, by
$$
L_2(\gamma)=\int_I \|\dot{\gamma}(t)\|_{\gamma(t)} dt=\int_I \|\dot{\gamma}(t)\|_{\b(L^2)} dt.
$$
The rectifiable distance $d_2$ induced by the infima of the $L_2$-length of the paths joining given endpoints is a continuous map when we give $\gg$ the natural topology as an open subset of the bounded linear operators on $H^1_0$. However, the topology induced by this rectifiable distance on $\gg$ is finer thus what we have introduced is a weak Finsler metric on the manifold $\gg$.

\begin{prop}\label{minimal curves in G}
Suppose that $G\in\gg$ with $\|G-1\|_{\b(H^1)}\le 1$. Then there exists a curve $\delta(t)=e^{tX}$, with $X\in\Gamma$ such that $\delta(1)=G$, which has minimal length among all curves in $\gg$ joining $1$ and $G$, and in particular $d_2(1,G)=\|X\|_{{\mathcal B}(L^2)}$.
\end{prop}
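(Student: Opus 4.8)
The plan is to exhibit the curve explicitly, compute its length, and then prove minimality by transporting the whole problem to the unitary group $U(L^2)$, where minimality of one-parameter subgroups for the operator-norm metric is already available. First I would apply Lemma \ref{lemaexponencial} $ii)$: since $\|G-1\|\le 1$ there is $X\in\Gamma$ with $e^X=G$, and by the Lie-algebra computation $e^{tX}\in\gg$ for all $t$, so $\delta(t)=e^{tX}$ is a curve in $\gg$ with $\delta(0)=1$, $\delta(1)=G$ and $\dot\delta(t)=\delta(t)X$. Writing $Z$ for the skew-adjoint extension of $X$ to $L^2$ (so $X=Z|_{H^1_0}$), the operator $\delta(t)$ extends to the $L^2$-unitary $\overline{\delta(t)}=e^{tZ}$ and $\dot\delta(t)$ extends to $e^{tZ}Z$. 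Hence
\[
\|\dot\delta(t)\|_{\b(L^2)}=\|e^{tZ}Z\|_{\b(L^2)}=\|Z\|_{\b(L^2)}=\|X\|_{\b(L^2)},
\]
and integrating gives $L_2(\delta)=\|X\|_{\b(L^2)}$.

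For minimality I would push every competing curve into $U(L^2)$ via the extension map of Proposition \ref{desi}, the crucial point being that this map is \emph{contractive} on the Lie algebra: for $W\in\Gamma$ the Remark after Theorem \ref{symmetrizable results} yields $\|\overline W\|_{\b(L^2)}\le\|W\|_{\b(H^1_0)}$. Given any piecewise $C^1$ curve $\gamma$ in $\gg$ from $1$ to $G$, set $X(t)=\gamma(t)^{-1}\dot\gamma(t)\in\Gamma$, so $L_2(\gamma)=\int\|X(t)\|_{\b(L^2)}\,dt$. To see that $\overline{\gamma}$ is differentiable in $\b(L^2)$ with $\overline{\gamma}(t)^{-1}\dot{\overline{\gamma}}(t)=\overline{X(t)}$, I would write $E_h=\gamma(t)^{-1}\gamma(t+h)\in\gg$, take $Y_h=\log E_h\in\Gamma$ for small $h$, so that $\overline\gamma(t)^{-1}(\overline\gamma(t+h)-\overline\gamma(t))/h=(e^{\overline{Y_h}}-1)/h$; since $Y_h/h\to X(t)$ in $\b(H^1_0)$, contractivity gives $\overline{Y_h}/h\to\overline{X(t)}$ in $\b(L^2)$ while the higher-order remainder is $O(\|\overline{Y_h}\|_{\b(L^2)})\to 0$. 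Thus $\overline{\gamma}$ is a curve in $U(L^2)$ from $1$ to $\bar G$ with the \emph{same} length $L(\overline{\gamma})=\int\|\overline{X(t)}\|_{\b(L^2)}\,dt=L_2(\gamma)$.

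The engine is then the minimality of one-parameter subgroups in the unitary group: if $Z_0=Z_0^*\in\b(L^2)$ with $\|Z_0\|_{\b(L^2)}\le\pi$, the curve $t\mapsto e^{itZ_0}$ is minimal in $U(L^2)$ for the bi-invariant operator-norm metric, with distance $\|Z_0\|_{\b(L^2)}$. Setting $Z_0=-iZ$ (self-adjoint), we have $\bar G=e^{Z}=e^{iZ_0}$. The step I expect to be the \textbf{main obstacle} is verifying the spectral bound $\|Z_0\|_{\b(L^2)}\le\pi$, which I would obtain as follows: $\|G-1\|\le 1$ gives $\sigma_{H^1_0}(G)\subset\{z:|z-1|\le1\}$, so the principal logarithm used in Lemma \ref{lemaexponencial} places $\sigma_{H^1_0}(X)=\log\sigma_{H^1_0}(G)$ in the strip $\{w:\operatorname{Im}w\in(-\pi/2,\pi/2)\}$; since $iX$ is symmetrizable, Theorem \ref{symmetrizable results} $ii)$ gives $\sigma_{L^2}(Z)\subseteq\sigma_{H^1_0}(X)$, and as $Z$ is skew-adjoint $\sigma_{L^2}(Z)\subset i(-\pi/2,\pi/2)$, whence $\|Z_0\|_{\b(L^2)}\le\pi/2<\pi$.

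Combining the three ingredients, for any admissible $\gamma$ one gets
\[
L_2(\gamma)=L(\overline{\gamma})\ge\|Z_0\|_{\b(L^2)}=\|Z\|_{\b(L^2)}=\|X\|_{\b(L^2)}=L_2(\delta),
\]
so $\delta(t)=e^{tX}$ is minimal among all curves in $\gg$ joining $1$ and $G$, and $d_2(1,G)=\|X\|_{\b(L^2)}$. The conceptual core is that the extension map is a length-preserving embedding of $(\gg,L_2)$ into $U(L^2)$ and that the hypothesis $\|G-1\|\le 1$ forces the generator to fall well inside the minimal range $\|Z_0\|\le\pi$; the rest is the cited minimality in the unitary group.
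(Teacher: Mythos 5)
Your proposal is correct and follows essentially the same route as the paper: extract $X=\log G\in\Gamma$ via Lemma \ref{lemaexponencial}, bound $\|X\|_{\b(L^2)}\le\pi/2$ using the hypothesis $\|G-1\|\le 1$, and reduce minimality to the known minimality of one-parameter unitary groups in $U(L^2)$ for the operator-norm Finsler metric. The only (harmless) variations are that you bound the spectrum of the generator directly, via Krein's inclusion $\sigma_{L^2}(Z)\subseteq\sigma_{H^1_0}(X)$ together with the spectral mapping of the principal logarithm, whereas the paper bounds $\sigma_{L^2}(\bar G)$ through the inclusion (\ref{espectros}) applied to $G$ and $G^{-1}$, and that you verify in detail the $\b(L^2)$-differentiability of the extended curve $\overline{\gamma}$ and the equality of lengths, a point the paper asserts without proof.
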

\begin{proof}
By Lemma \ref{lemaexponencial}, there exists $X\in\Gamma$ such that $e^X=G$. Moreover, $X=log(G)$, with $log$ being the branch of the logarithm with singularities in the negative real axis.
By the formula (\ref{espectros}),
$$
\sigma_{L^2}(\bar{G})\subset \sigma_{H^1_0}(G)\cup \overline{\sigma_{H^1_0}(G^{-1})}.
$$
Note that $\|G-1\|\le 1$ implies that $\sigma_{H^1_0}(G)\subset \{z\in\mathbb{C}: Re(z)\ge 0\}$. Then, if $\lambda \in \sigma_{H^1_0}(G^{-1})$, $\lambda=\mu^{-1}$ with $\mu\in \sigma_{H^1_0}(G)$, and thus $Re(\lambda)\ge 0$. It follows that
$$
\sigma_{L^2}(\bar{G})\subset \{z\in\mathbb{C}: Re(z)\ge 0\}\cap \mathbb{T}=\{e^{i\theta}: |\theta|\le \pi/2\},
$$
and therefore $\|X\|_{\b(L^2)}\le \pi/2$. Note that $L_2(\gamma)$ equals the length of the curve of unitaries  in $L^2$, measured with the Finsler metric given by the usual operator norm on $\b(L^2)$. It is a known fact that one parameter groups of unitaries $e^{tX}$ have minimal length along their paths, for time $t$ such that $|t|\|X\|_{\b(L^2)}\le \pi$ (see for instance \cite{al}). Therefore $\delta(t)$ remains minimal  for $|t|\le 2$, which proves our assertion.
\end{proof}

\subsection{The subgroups $\gg_p$}
Let $\b_p(H^1)$,  $1\leq p \leq \infty$, be the Schatten ideals of operators on $H^1_0$. As usual, $\b_\infty(H^1_0)$ stands for the compact operators on $H^1_0$. We introduce the following subgroups:
\[  \gg_p :=\gg \cap (I- \b_p (H^1_0)).  \]
 Clearly,  $\gg_p\subset\gg_\infty$ properly. Apparently, the Banach-Lie algebra $\Gamma_p$ of $\gg_p$ is
$$
\Gamma_p=\Gamma\cap \b_p(H^1_0).
$$



For the subgroup $\gg_\infty$ there is a stronger result on the minimality of curves.  First note that using Lemma \ref{lemaexponencial}, we obtain that any $G\in\gg_\infty$ is of the form $G=e^X$ for some compact operator $X\in\Gamma$.
\begin{prop}\label{exponente compacto}
An operator $G$ belongs to $\gg_\infty$ if and only if there exists a compact operator $X\in\Gamma$ such that $e^X=G$
\end{prop}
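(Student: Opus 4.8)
The plan is to treat the two implications separately, the backward one being routine and the forward one carrying the real content.

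\emph{The easy implication.} First I would dispatch the ``if'' direction. Suppose $X \in \Gamma$ is compact. By the lemma identifying $\Gamma$ with the Lie algebra of $\gg$ (the computation giving $(e^{tX})^*A = A(e^{tX})^{-1}$), we already know $e^X \in \gg$. Moreover
\[ e^X - I = \sum_{k \geq 1} \frac{X^k}{k!}, \]
and since each summand is compact and the series converges in the operator norm of $\b(H^1_0)$, the closedness of the ideal $\b_\infty(H^1_0)$ shows $e^X - I$ is compact. Hence $e^X \in \gg \cap (I - \b_\infty(H^1_0)) = \gg_\infty$.

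\emph{The substantive implication.} Now let $G \in \gg_\infty$, so $G = I + K$ with $K \in \b_\infty(H^1_0)$. I would first pin down the spectrum. The operator $K = G - I$ is proper (a difference of proper operators, recalling $G^+ = G^{-1}$) and compact on $H^1_0$; by Theorem \ref{proper op} iii) it is then compact on $L^2$ and $\sigma_{L^2}(K) = \sigma_{H^1_0}(K)$. Since $\bar G = I + K$ is unitary on $L^2$, this yields
\[ \sigma_{H^1_0}(G) = 1 + \sigma_{H^1_0}(K) = 1 + \sigma_{L^2}(K) = \sigma_{L^2}(\bar G) \subset \mathbb{T}. \]
Thus $\sigma_{H^1_0}(G)$ is a countable closed subset of the unit circle accumulating only at $1$, hence a proper subset of $\mathbb{T}$. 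Picking $e^{i\theta_0} \in \mathbb{T} \setminus \sigma_{H^1_0}(G)$ and letting $\mathcal{L}$ be the ray $\{ r e^{i\theta_0} : r \geq 0 \}$, we get $\mathcal{L} \cap \sigma_{H^1_0}(G) = \emptyset$ precisely because the spectrum lies on $\mathbb{T}$. Lemma \ref{lemaexponencial} i) then supplies $X = \log(G) \in \Gamma$, defined by the Riesz functional calculus for the branch of the logarithm cut along $\mathcal{L}$, with $e^X = G$.

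\emph{Compactness of the logarithm.} The remaining and central point is that this $X$ is compact. Here I would use that the chosen branch can be normalized so that $\log(1) = 0$ (legitimate because $1 \in \sigma_{H^1_0}(G)$ forces the cut $\mathcal{L}$ to avoid $1$). Then the function
\[ g(z) = \frac{\log z}{z - 1} \]
extends holomorphically across $z = 1$ (removable singularity, $g(1) = 1$) and is holomorphic on a neighborhood of $\sigma_{H^1_0}(G)$, since that spectrum avoids $\mathcal{L}$. From $\log z = (z-1)\,g(z)$ and the multiplicativity of the holomorphic functional calculus,
\[ X = \log(G) = (G - I)\,g(G) = K\,g(G). \]
As $K$ is compact and $g(G)$ is bounded, the ideal property gives $X \in \b_\infty(H^1_0)$, so $X \in \Gamma \cap \b_\infty(H^1_0)$ with $e^X = G$.

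The step I expect to be the main obstacle is exactly this compactness of $\log(G)$: one must select the branch so that $\log(1) = 0$, for only then is the factorization $\log z = (z-1)\,g(z)$ through a single function $g$ holomorphic near the whole of $\sigma_{H^1_0}(G)$ available, which is what converts the compactness of $G - I$ into the compactness of $X$. The identification $\sigma_{H^1_0}(G) \subset \mathbb{T}$ via Theorem \ref{proper op} iii) is what makes both the choice of $\mathcal{L}$ and this normalization clean.
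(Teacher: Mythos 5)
Your proof is correct, and it follows the same skeleton as the paper's: locate $\sigma_{H^1_0}(G)$ on $\mathbb{T}$, accumulating only at $1$; pick a ray $\mathcal{L}$ disjoint from it; apply Lemma \ref{lemaexponencial} i) to obtain $X=\log(G)\in\Gamma$ with $e^X=G$; and then prove that this logarithm is compact. Where you diverge, you gain in precision. For the spectral localization, the paper merely asserts that $\sigma_{H^1_0}(G)$ is finite or a sequence in $\mathbb{T}$ converging to $1$; you actually derive it, via properness of $K=G-I$, Theorem \ref{proper op} iii), and unitarity of $\bar G$. For the compactness of $X$, the paper argues that the Riesz integral lies in the closed unital subalgebra $\mathbb{C}1+\b_\infty(H^1_0)$, so $X=\lambda 1+K'$ with $K'$ compact, and then must dispose of the scalar: it adjusts the branch so that $0\in\sigma_{H^1_0}(X)$ (``trimming'' eigenvalues by multiples of $2\pi i$) and concludes $\lambda=0$ from non-invertibility of $X$ --- an inference that, as written, needs extra justification, since $\lambda 1+K'$ can fail to be invertible with $\lambda\neq 0$. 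Your route avoids this entirely: normalizing the branch by $\log(1)=0$ (legitimate, as you note, because $1\in\sigma_{H^1_0}(G)$ forces $1\notin\mathcal{L}$) makes $g(z)=\log(z)/(z-1)$ holomorphic on a neighborhood of the spectrum, and multiplicativity of the functional calculus gives $X=K\,g(G)$, compact by the ideal property, with no scalar term to eliminate. As a bonus, the same factorization adapts verbatim to the Schatten ideals: if $G-I\in\b_p(H^1_0)$, then $X=K\,g(G)\in\b_p(H^1_0)$, which yields the existence part of Proposition \ref{p normas} without rerunning the unitization argument there.
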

\begin{proof}
The sufficient part is clear. Note that the spectrum $\sigma_{H^1_0}(G)$ is finite, or a sequence in $\mathbb{T}$ converging to $1$. In particular one can always find a half-line ${\cal L}$ connecting $0$ and infinity, which does not intersect $\sigma_{H^1_0}(G)$. Thus by  Lemma \ref{lemaexponencial}, there exists  $X\in \Gamma$ such that $e^X=G$,
namely
$$
X=\frac{1}{2\pi i}\int_\alpha log(z)(G-z1)^{-1} d z,
$$
where $\alpha$ is a simple closed curve which does not intersect ${\cal L}$ and encompasses $\sigma_{H^1_0}(G)$. Note that  since $1\in \sigma_{H^1_0}(G)$, one can adjust the definition of log in a way such that $0\in \sigma_{H^1_0}(X)$ (trimming eigenvalues which are multiples of $2\pi i$).
It remains to prove that such $X$ is compact. Note that $G-z1$ belongs to the Banach algebra $\mathbb{C}1+\b_\infty(H^1_0)$, the unitization of the algebra $\b_\infty(H^1_0)$ of compact operators. Therefore (since $log(z)(G-z1)^{-1}$ is a continuous map in $z$, defined on a neighborhood of $\sigma_{H^1_0}(G)$), it follows that $X\in \mathbb{C}1+\b_\infty(H^1_0)$, i.e. $X=\lambda +K$. Since $X$ is non-invertible, it must be $\lambda=0$.
\end{proof}

In particular, if $G_1,G_2\in\gg_\infty$, then there exists a compact operator $X\in\Gamma$ such that $G_2=e^XG_1$.

\begin{teo}\label{thm minimal curves}
Let $G_1,G_2\in\gg_\infty$.  Then there exists a compact operator $X\in \Gamma$ such that the curve $\delta(t)=e^{tX}G_1$ verifies $\delta(1)=G_2$ and has minimal length among all curves joining the same endpoints in $\gg_\infty$ (and in $\gg$).
\end{teo}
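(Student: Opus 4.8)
The strategy is to transport the problem, through the extension map $G\mapsto\bar G$, to the unitary group $U(L^2)$ with its operator-norm Finsler metric, where the minimality of one-parameter subgroups is already available (exactly as in the proof of Proposition \ref{minimal curves in G}). First I would reduce to the based case. Since $\gg_\infty$ is a group, $G:=G_2G_1^{-1}\in\gg_\infty$, and by Proposition \ref{exponente compacto} there is a compact $X\in\Gamma$ with $e^X=G$. The Finsler metric is bi-invariant, as recorded in Section \ref{Finsler metrics}: right translation by a fixed $H\in\gg$ sends a tangent vector $V$ to $VH$, and $\|VH\|_{\b(L^2)}=\|\bar V\bar H\|_{\b(L^2)}=\|\bar V\|_{\b(L^2)}$ because $\bar H$ is unitary. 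Hence right translation by $G_1$ is a length-preserving bijection between curves from $I$ to $G$ and curves from $G_1$ to $G_2$, carrying $t\mapsto e^{tX}$ to $\delta(t)=e^{tX}G_1$, and it suffices to prove that $\eta(t)=e^{tX}$ is minimal among all curves of $\gg$ joining $I$ and $G$. Minimality in $\gg$ is the stronger assertion and implies minimality within the subclass of curves lying in $\gg_\infty$.

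Next I would fix the logarithm so that $\|\bar X\|_{\b(L^2)}\le\pi$, where $\bar X$ is the skew-adjoint extension of $X$ to $L^2$, which is compact there by Theorem \ref{symmetrizable results}. The decisive fact is that every element of $\gg_\infty$ has spectrum on the unit circle: if $Gf=\lambda f$ with $0\ne f\in H^1_0$, then $|\lambda|^2|f|_2^2=|f|_2^2$ forces $|\lambda|=1$, and since $G$ is a compact perturbation of the identity its spectrum consists of such eigenvalues together with the limit point $1$; thus $\sigma_{H^1_0}(G)\subseteq\mathbb T$. Consequently $\bar G=e^{\bar X}$ is a unitary compact perturbation of the identity whose spectrum is a sequence of eigenvalues in $\mathbb T$ converging to $1$, and whose eigenvectors $e_k$ for the eigenvalues $\ne1$ lie in $H^1_0$ (they coincide with the corresponding eigenvectors of $G$ by Theorem \ref{proper op}). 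For each eigenvalue $e^{i\theta_k}\ne1$ I would pick the argument $\theta_k\in(-\pi,\pi]$ and set $\bar Xe_k=i\theta_k e_k$, while letting $\bar X=0$ on $\ker(\bar G-I)$; then $\|\bar X\|_{\b(L^2)}=\sup_k|\theta_k|\le\pi$. Because the eigenvalues accumulate only at $1$, this $\bar X$ agrees with the Riesz-calculus logarithm of Proposition \ref{exponente compacto} except on finitely many eigenvectors of $H^1_0$; the difference is a finite-rank operator, hence bounded and compact on $H^1_0$, so $X:=\bar X|_{H^1_0}$ is a compact element of $\Gamma$ with $e^X=G$.

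Finally I would compute and compare lengths in $U(L^2)$. Along $\eta(t)=e^{tX}$ the tangent vector is $\eta(t)X$, and since $\overline{e^{tX}}=e^{t\bar X}$ is unitary one gets $\|\dot\eta(t)\|_{\eta(t)}=\|\bar X\|_{\b(L^2)}$, so $L_2(\eta)=\|\bar X\|_{\b(L^2)}$. The extension map is length-preserving: for a differentiable curve $\gamma$ in $\gg$, Proposition \ref{desi} gives $\|\overline{\gamma(t+h)}-\overline{\gamma(t)}\|_{\b(L^2)}\le C_t\,\|\gamma(t+h)-\gamma(t)\|$ on each compact parameter interval, whence $\overline\gamma$ is differentiable in $L^2$-operator norm with $\tfrac{d}{dt}\overline{\gamma(t)}=\overline{\dot\gamma(t)}$, and therefore $L_2(\gamma)$ equals the operator-norm length of $\overline\gamma$ in $U(L^2)$. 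Now $e^{t\bar X}$ is a one-parameter group of unitaries with $\|\bar X\|_{\b(L^2)}\le\pi$, hence minimal in $U(L^2)$ on $[0,1]$ (minimality of one-parameter unitary groups for $|t|\,\|\bar X\|_{\b(L^2)}\le\pi$; see \cite{al}). Thus for any competitor $\gamma$ in $\gg$ from $I$ to $G$ we obtain $L_2(\gamma)=\mathrm{length}(\overline\gamma)\ge\mathrm{length}(e^{t\bar X})=\|\bar X\|_{\b(L^2)}=L_2(\eta)$, and translating back by $G_1$ proves the theorem.

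The main obstacle is the second step: one must exhibit a single $X\in\Gamma$ that is simultaneously a logarithm of $G$, compact on $H^1_0$, and of $L^2$-norm at most $\pi$. The obstruction surfaces precisely when $-1\in\sigma_{H^1_0}(G)$, since then no branch cut along a ray keeps every argument inside $[-\pi,\pi]$; it is overcome only because the eigenvalues of $\bar G$ accumulate solely at $1$, so the required modification of the branch is finite-rank and acts on eigenvectors that already belong to $H^1_0$, keeping $X$ inside the compact part of $\Gamma$.
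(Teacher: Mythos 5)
Your proposal is correct and follows essentially the same route as the paper's proof: take a compact logarithm from Proposition \ref{exponente compacto}, renormalize it to have $L^2$-operator norm at most $\pi$ by shifting finitely many eigenvalues by multiples of $2\pi i$ supported on finite-rank spectral projections whose ranges lie in $H^1_0$ (the paper corrects the Riesz logarithm eigenvalue by eigenvalue; you build the principal logarithm spectrally and then compare it with the Riesz one to get compactness --- the identical modification), and then reduce minimality to that of one-parameter unitary groups in $U(L^2)$, exactly as in Proposition \ref{minimal curves in G}. The one weak link in your write-up --- inferring from the Lipschitz bound of Proposition \ref{desi} that $\overline{\gamma}$ is norm-differentiable with derivative $\overline{\dot{\gamma}(t)}$, which is a non sequitur as stated --- concerns a point the paper itself asserts without proof, and the conclusion is true (e.g., strong differentiability on the dense subspace $H^1_0$ already yields the needed length comparison), so it does not separate your argument from theirs.
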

\begin{proof}
By the above Proposition, there exists  $X\in\Gamma$, which is compact and verifies that $e^XG_1=G_2$. If $\|X\|_{\b(L^2)}\le \pi$, the result follows using the same argument as in Proposition \ref{minimal curves in G}, with the (unitary extension of the) curve $\delta(t)=e^{tX}G_1$. Suppose otherwise, then there exist finitely many eigenvalues $\lambda$ of $X$, such that $|\lambda|>\pi$. Pick one such $\lambda$. If $P$ is the spectral projection in $L^2$ associated to $\lambda$, then $P(H^1_0)\subset H_0^1$. Indeed, the eigenvectors of the extension of $X$ to $L^2$ belong to $H^1_0$ (see Theorem \ref{symmetrizable results} $iii)$). Therefore, $iP|_{H^1_0}\in\Gamma$. There exists an integer $m$ such that $|\lambda-2m\pi i|\le\pi$. Then $X'=X-2m\pi i P\in\Gamma$ is compact, and clearly verifies $e^{X'}=e^X$. Replacing in this fashion all the eigenvalues (finite in number) which lie outside $[-\pi,\pi]$ yields a compact operator $X_0$ such that $e^{X_0}G_1=G_2$
\end{proof}

\begin{prop}\label{p normas}
If $G\in\gg_p$, then there exists $X\in\Gamma_p$ such that $\|X\|\le \pi$ and  $e^X=G$.
\end{prop}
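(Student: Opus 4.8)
The plan is to combine the logarithm construction of Proposition \ref{exponente compacto} with an ideal argument that upgrades ``compact'' to ``Schatten-$p$'', and then to normalize the spectrum to obtain the norm bound. Since $\gg_p\subset\gg_\infty$, Proposition \ref{exponente compacto} already furnishes a compact $X_0\in\Gamma$ with $e^{X_0}=G$. Being compact and symmetrizable, $X_0$ extends to a skew-adjoint compact operator $\overline{X_0}=Z_0$ on $L^2$ (Theorem \ref{symmetrizable results}), so by Theorem \ref{proper op} $iii)$ we have $\sigma_{H^1_0}(X_0)=\sigma_{L^2}(Z_0)\subset i\mathbb{R}$, a sequence of eigenvalues accumulating only at $0$. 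By the spectral mapping theorem for the exponential, $\sigma_{H^1_0}(G)=e^{\sigma_{H^1_0}(X_0)}$ is then a countable subset of $\mathbb{T}$ whose only accumulation point is $1$.

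First I would prove that $X_0\in\b_p(H^1_0)$. Because $\sigma_{H^1_0}(G)\subset\mathbb{T}$ is countable with sole accumulation point $1$, I can choose $\theta_0\in(0,2\pi)$ with $e^{i\theta_0}\notin\sigma_{H^1_0}(G)$ and take the branch of the logarithm cut along the ray through $e^{i\theta_0}$; this ray misses $\sigma_{H^1_0}(G)$, and $X_0=\log(G)$ is the corresponding Riesz functional calculus element. The function $h(z)=\log(z)/(z-1)$, with $h(1):=1$, is holomorphic on a neighbourhood of $\sigma_{H^1_0}(G)$, so $h(G)\in\b(H^1_0)$, and multiplicativity of the functional calculus gives $X_0=h(G)(G-I)$. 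Since $G-I\in\b_p(H^1_0)$ and $\b_p(H^1_0)$ is a two-sided ideal, it follows that $X_0\in\b_p(H^1_0)$, i.e. $X_0\in\Gamma_p$.

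Next I would normalize the spectrum to lie in $i[-\pi,\pi]$, exactly as in the proof of Theorem \ref{thm minimal curves}. Only finitely many eigenvalues $i\mu_k$ of $X_0$ fall outside $i[-\pi,\pi]$ (since $\mu_k\to 0$); for each such eigenvalue the spectral projection $P_k$ of $Z_0$ is finite rank, carries $H^1_0$ into $H^1_0$, and satisfies $iP_k|_{H^1_0}\in\Gamma$ (Theorem \ref{symmetrizable results} $iii)$). Subtracting suitable multiples $2\pi i\,m_k P_k$ produces $X:=X_0-\sum_k 2\pi i\,m_k P_k\in\Gamma$ with $e^X=e^{X_0}=G$ and all eigenvalues in $i[-\pi,\pi]$. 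The correction is finite rank, so $X$ remains in $\b_p(H^1_0)$, hence $X\in\Gamma_p$. Its $L^2$-extension is skew-adjoint with spectrum in $i[-\pi,\pi]$, so as a normal operator on $L^2$ its norm equals its spectral radius, giving $\|X\|_{\b(L^2)}\le\pi$.

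The step I expect to be the main obstacle is precisely this last norm estimate as measured on $H^1_0$. The bound $\le\pi$ is transparent for the skew-adjoint $L^2$-extension, where normality forces the operator norm to coincide with the spectral radius; on $H^1_0$, however, $X$ need not be normal, so $\|X\|_{\b(H^1_0)}$ cannot be read off from $\sigma_{H^1_0}(X)$ directly, and one must transfer the estimate through the $L^2$-extension (equivalently, through the surjective isometry $\mathcal{A}^{1/2}$, which conjugates the relevant operator on $L^2$ into $H^1_0$). A secondary delicate point is making the logarithm both well defined and correctly normalized: choosing the branch cut away from $\sigma_{H^1_0}(G)$ first (which matters when $-1\in\sigma_{H^1_0}(G)$, so that the principal cut would be obstructed) and only afterwards correcting the finitely many out-of-range eigenvalues by finite-rank modifications, so that neither the relation $e^X=G$ nor membership in $\b_p(H^1_0)$ is disturbed.
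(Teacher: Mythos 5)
Your proof is correct, and at the decisive step it takes a genuinely different route from the paper. The paper proves $X\in\b_p(H^1_0)$ by running the Riesz integral $X=\frac{1}{2\pi i}\int_\alpha \log(z)(G-z1)^{-1}\,dz$ inside the unitization $\mathbb{C}1+\b_p(H^1_0)$, regarded as a Banach algebra with the $p$-norm: since the integrand is continuous in that norm, the integral converges there, so $X=\lambda 1+K$ with $K\in\b_p(H^1_0)$, and $\lambda=0$ because the branch is arranged so that $0\in\sigma_{H^1_0}(X)$. You instead factor $\log(G)=h(G)(G-I)$ with $h(z)=\log(z)/(z-1)$ holomorphic near $\sigma_{H^1_0}(G)$ (the singularity at $z=1$ being removable), and invoke the two-sided ideal property of $\b_p(H^1_0)$. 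Your mechanism is more elementary: it needs no $p$-norm continuity of the resolvent in the unitized algebra and no normalization forcing $0$ into the spectrum to kill the scalar part; what the paper's template buys is uniformity, since the identical integral argument proves both Proposition \ref{exponente compacto} and Proposition \ref{p normas} with only the ambient algebra changed (and, as the paper remarks, it transfers verbatim to general symmetrically normed ideals--though your factorization does too). One presentational caveat: the operator furnished by Proposition \ref{exponente compacto} is built from \emph{its} branch cut and a trimming of eigenvalues, so it need not coincide with $\log(G)$ for \emph{your} chosen branch; the clean fix is to define $X_0:=\log(G)$ directly with your branch and verify $X_0\in\Gamma$ by the computation of Lemma \ref{lemaexponencial} $i)$, rather than importing $X_0$ from the proposition and then identifying it with a functional calculus element.

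Your closing suspicion about the norm is also well founded, and in fact you have repaired a defect in the paper. Read literally with the paper's conventions, $\|X\|\le\pi$ is a bound in $\b(H^1_0)$; the paper's proof does not establish this (it attributes the bound to Proposition \ref{exponente compacto}, which contains no norm estimate at all--the eigenvalue normalization is only carried out later, inside the proof of Theorem \ref{thm minimal curves}), and the subsequent theorem actually invokes Proposition \ref{p normas} in the form $\|X\|_{\b(L^2)}\le\pi$. Indeed the $\b(H^1_0)$ bound is false in general: for $G=e^{if\otimes Af}$ with $|f|_2=1$, which lies in every $\gg_p$, the unique $X\in\Gamma$ with $e^X=G$ and spectrum in $i[-\pi,\pi]$ is $if\otimes Af$, whose $\b(H^1_0)$-norm $|f|_1|Af|_1$ can be made arbitrarily large (paper's own Examples in Section \ref{section spectra}). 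So your reading--explicit finite-rank spectral corrections to push the eigenvalues into $i[-\pi,\pi]$, followed by the bound $\|X\|_{\b(L^2)}\le\pi$ from normality of the skew-adjoint extension--is exactly the statement that is both true and used later, and your proof of it is complete.
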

\begin{proof}
By Proposition \ref{exponente compacto}, there exists a compact operator $X\in\Gamma$ with $\|X\|\le \pi$ such that $e^X=G$. It remains to prove that $X\in\b_p(H^1_0)$. The proof is similar to that case: consider now the Banach algebra $\mathbb{C}1+\b_p(H^1_0)$,  the unitization of $\b_p(H^1_0)$, which is a Banach algebra with the $p$-norm. Since $log(z)(G-z1)^{-1}$ is continuous in the $p$-norm topology, it follows  $X=\lambda 1+K$, with $K\in\b_p(H^1_0)$. Again, since $0\in\sigma_{H^1_0}(X)$, $\lambda=0$.
\end{proof}
Since $\Gamma_p\subset \b_p(H^1_0)$, a natural metric to consider in $\gg_p$, which takes account of the specific spectral properties of the elements in $\gg_p$, should be related to the $p$-norm. On the other hand, as remarked at the beginning of this section, we want the metric to be invariant by the action of the group. By Theorem \ref{symmetrizable results}, the operators $X\in\Gamma_p$, when extended to $L^2$, are compact and antihermitic. Moreover, the eigenvalues and multiplicities of the extension remain the same as for $X$. By a classical inequality of Lalesco \cite{lalesco} (see also \cite{simon}), the $p$-norm of the sequence of eigenvalues of $X$ is bounded by the $p$-norm of the sequence of singular values of $X$. The former equals the $p$-norm of the extension of $X$ to $L^2$ (because $X$ is antihermitic there),  the latter is the $p$-norm of $X$ in $H^1_0$. Thus,
\begin{equation}\label{des entre los dos ideales}
\|X\|_{p,\b(L^2)}\le \|X\|_p.
\end{equation}
We define the following metric in $\gg_p$: if $X\in (T\gg_p)_G$, then
$$
\|X\|_{p,G}=\|X\|_{p,\b(L^2)}.
$$
\begin{teo}
Let $G_1, G_2\in \gg_p$. Then there exists $X\in\Gamma_p$ such that the curve $\delta(t)=e^{tX}G_1$ in $\gg_p$, verifies $\delta(1)=G_2$ and has minimal length for the above defined metric, among all smooth curves joining the same endpoints in $\gg_p$.
\end{teo}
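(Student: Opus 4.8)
The plan is to reduce the problem, via the extension map $G \mapsto \bar{G}$ of Proposition \ref{desi}, to the known minimality of one-parameter subgroups in the unitary group $U(L^2)$ equipped with the Finsler metric of a unitarily invariant norm, exactly as was done for the spectral norm in Proposition \ref{minimal curves in G}. First I would produce the candidate geodesic. Since $\gg_p$ is a group, $G_2 G_1^{-1} \in \gg_p$, and Proposition \ref{p normas} gives an operator $X \in \Gamma_p$ with $\|X\| \le \pi$ and $e^X = G_2 G_1^{-1}$; thus $\delta(t) = e^{tX} G_1$ lies in $\gg_p$ and satisfies $\delta(1) = G_2$. By the remark following Theorem \ref{symmetrizable results}, the extension $\bar{X}$ of $X$ to $L^2$ is antihermitic and satisfies $\|\bar{X}\|_{\b(L^2)} \le \|X\|_{\b(H^1_0)} \le \pi$.

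Next I would show that the extension map is length-preserving for the $p$-metric. For a smooth curve $\gamma$ in $\gg_p$ the tangent $\dot\gamma(t)$ lies in $\gamma(t)\Gamma_p$, hence extends to an operator on $L^2$, and, up to the routine check that extension commutes with differentiation in $t$, one has $\overline{\dot\gamma(t)} = \dot{\bar\gamma}(t)$ for a smooth curve $\bar\gamma$ in $U(L^2)$. Since the metric is $\|\dot\gamma(t)\|_{p,G} = \|\overline{\dot\gamma(t)}\|_{p,\b(L^2)}$, this is precisely the length element of $\bar\gamma$ for the invariant Schatten $p$-Finsler metric of $U(L^2)$, whence $L_p(\gamma) = L_p(\bar\gamma)$. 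Applying this to $\delta$, using $\bar{\delta}(t) = e^{t\bar X}\bar{G_1}$ and the unitary invariance of the $p$-norm, gives $\|\dot{\bar\delta}(t)\|_{p,\b(L^2)} = \|\bar{X}\|_{p,\b(L^2)} = \|X\|_{p,\b(L^2)}$, so $L_p(\delta) = \|X\|_{p,\b(L^2)}$.

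Finally I would deduce minimality by comparison in the larger group. Any curve $\gamma$ joining $G_1, G_2$ in $\gg_p$ maps to a curve $\bar\gamma$ joining $\bar{G_1}, \bar{G_2}$ in $U(L^2)$ of the same length, so the rectifiable distances satisfy $d_p(G_1, G_2) \ge d_p^{U(L^2)}(\bar{G_1}, \bar{G_2})$. On the other hand $\bar{\delta}(t) = e^{t\bar X}\bar{G_1}$ is a one-parameter group translate with $\|\bar{X}\|_{\b(L^2)} \le \pi$, hence minimal in $U(L^2)$ for the $p$-Finsler metric; therefore $L_p(\delta) = L_p(\bar\delta) = d_p^{U(L^2)}(\bar{G_1}, \bar{G_2}) \le d_p(G_1, G_2) \le L_p(\delta)$, and all inequalities are equalities. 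This shows $\delta$ is minimal in $\gg_p$ and that $d_p(G_1, G_2) = \|X\|_{p,\b(L^2)}$.

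The main obstacle is the minimality of $\bar\delta$ in $U(L^2)$ for the Schatten $p$-norm metric: for the spectral norm this is the input from \cite{al} used in Proposition \ref{minimal curves in G}, and for a general unitarily invariant (in particular $p$-) norm one needs the corresponding statement, valid under the operator-norm bound $\|\bar X\|_{\b(L^2)} \le \pi$ (the control of the spectrum of $\bar{X}$ being exactly what prevents the one-parameter group from wrapping around, independently of which symmetric norm measures length). A secondary technical point is the careful justification that $t \mapsto \bar\gamma(t)$ is differentiable with $\dot{\bar\gamma}(t) = \overline{\dot\gamma(t)}$, so that the extension map genuinely preserves lengths; once this is granted, the remainder is the clean comparison argument above.
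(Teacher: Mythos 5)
Your proposal is correct and takes essentially the same route as the paper: reduce to $G_2G_1^{-1}\in\gg_p$, obtain $X\in\Gamma_p$ with $e^X=G_2G_1^{-1}$ and $\|X\|_{\b(L^2)}\le\pi$ via Proposition \ref{p normas}, note that the $p$-metric on $\gg_p$ is by definition the length element of the extended curves of unitaries on $L^2$, and then invoke the minimality of one-parameter unitary groups for the Schatten $p$-norm under the spectral bound $\|X\|_{\b(L^2)}\le\pi$, which is precisely the external input the paper cites (\cite{alr} for $p\ge 2$, \cite{alv} in general). The technical points you flag (differentiability of the extension of a curve, and that comparison curves in $\gg_p$ extend to curves of finite $p$-length) are glossed over in the paper's proof as well, so your write-up is, if anything, slightly more careful on the same argument.
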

\begin{proof}
By Proposition \ref{p normas}, there exists $X\in\Gamma_p$ with $\|X\|_{\b(L^2)}\le \pi$ such that $G_2=e^XG_1$. The result now follows as with $\gg_\infty$, using the  that in the classical unitary groups
$$
U_p(L^2)=\{G\in U(L^2): G-1\in\b_p(L^2)\},
$$
curves of the form $e^{tX}$, where $X$ is antihermitic,  have minimal length for the $p$-norm for $|t|\le 1$ provided that $\|X\|_{\b(L^2)}\le \pi$ (see \cite{alr} for the case $p\ge 2$ or \cite{alv} for the general case).
\end{proof}

\begin{rem}
Using Lalesco's inequality, one may prove  the inequality in (\ref{des entre los dos ideales}) for any symmetric norm  in the sense of \cite{simon}. Moreover, our last result on minimality of curves can be carried out in the general setting of symmetrically normed ideals.
\end{rem}

\medskip
\medskip

\noi
\begin{tabular}[h]{ll}
			 Esteban Andruchow and Gabriel Larotonda &  Eduardo Chiumiento \\
			 Instituto de Ciencias, UNGS & Dto. de Matem\'atica, FCE-UNLP \\
			 J. M. Gutierrez 1150 & Calles 50 y 115\\
						  (B1613GSX) Los Polvorines, Argentina   &  (1900) La Plata, Argentina \\
			   \textit{e-mails}: \texttt{eandruch@ungs.edu.ar,} & \textit{e-mail}: \texttt{eduardo@mate.unlp.edu.ar}\\
			   \texttt{glaroton@ungs.edu.ar}  & \\
			  & \\
			  & \\
			  E. Andruchow, E. Chiumiento and G. Larotonda &\\
        Instituto Argentino de Matem\'atica& \\
        ``Alberto P. Calder\'on'', CONICET & \\
        Saavedra 15  Piso 3& \\
                (1083) Buenos Aires, Argentina & \\
\end{tabular}

\end{document}